\numberwithin{equation}{section}
\newtheorem{lemma}{Lemma}[section]
\newtheorem{thm}{Theorem}
\newtheorem{prop}{Proposition}[section]
\newtheorem{RHP}{Riemann-Hilbert Problem}
\numberwithin{equation}{section}
\numberwithin{thm}{section}
\numberwithin{prop}{section}
\numberwithin{lemma}{section}
\numberwithin{re}{section}
\numberwithin{coro}{section}
\subjclass[2000]{35Q35, 35Q51}
\keywords{The coupled dispersive AB system;  Weighted Sobolev space; $\bar{\partial}$-steepset descent method;   The long-time asymptotic behaviors.}
\thanks{ Email: sftian@cumt.edu.cn (S.F. Tian). }
\begin{document}

\title[The long-time asymptotic behaviors of the solutions for the AB system]{The long-time asymptotic behaviors of the solutions for the coupled dispersive AB system with weighted Sobolev initial data}


\author[Wang]{Zi-Yi Wang}

\author[Tian]{Shou-Fu Tian$^{*}$}

\address{Shou-Fu Tian (Corresponding author$^{*}$)\newline
School of Mathematics and Institute of Mathematical Physics, China University of Mining and Technology, Xuzhou 221116, P. R. China}
\email{sftian@cumt.edu.cn}

\author[Li]{Zhi-Qiang Li}

\begin{abstract}
{In this work, we employ the $\bar{\partial}$-steepset descent method to study the Cauchy problem of the coupled dispersive AB system with initial conditions in weighted Sobolev space $H^{1,1}(\mathbb{R})$,
 \begin{align*}
\left\{\begin{aligned}
&A_{xt}-\alpha A-\beta AB=0,\\
&B_{x}+\frac{\gamma}{2}(|A|^2)_t=0,\\
&A(x,0)=A_0(x),~~~~B(x,0)=B_0(x)\in H^{1,1}(\mathbb{R}).
\end{aligned}\right.
\end{align*}
 Begin with the Lax pair of the coupled dispersive AB system, we successfully derive the solutions of the coupled dispersive AB system by constructing the basic Riemann-Hilbert problem. By using the $\bar{\partial}$-steepset descent method, the long-time asymptotic behaviors of the solutions for the coupled dispersive AB system are characterized without discrete spectrum. Our results demonstrate that compared with the previous results, we increase the accuracy of the long-time asymptotic solution from $O(t^{-1}\log t)$ to $O(t^{-1})$. }
\end{abstract}

\maketitle
\tableofcontents

\section{Introduction}

It is understood that in the geophysical fluid dynamics, the process of converting the available potential energy of the rotating layered fluid into a growing disturbance is referred to as the  Baroclinic instability\cite{boo-1,boo-2,boo-3}. The AB system is one of the most universal baroclinic wave packets equations, which was firstly proposed by Pedlosky through employing the singular perturbation theory\cite{boo-1}. In particular, the AB system is an important model in  geophysical fluid due to its surprisingly rich physical significance and broad applicability to a number of different areas. The AB system has also been suggested as a model for describing the unstable process of baroclinic waves in geophysical flow and describing ultra-short optical pulse propagation in nonlinear optics\cite{boo-2,boo-3}. The coupled dispersive AB system reads
\begin{align}\label{AB}
\left\{\begin{aligned}
&A_{xt}-\alpha A-\beta AB=0,\\
&B_{x}+\frac{\gamma}{2}(|A|^2)_t=0,
\end{aligned}\right.
\end{align}
where $A=A(x,t)$ is complex function, while $B=B(x,t)$ is real function, $\alpha>0$ and $\alpha<0$ represent the supercritical and subcritical case of the shear, respectively, and the real parameters $\beta$, $\gamma$ denote the nonlinearities. In this work, we just consider the case that $\beta\gamma=-1$. The research for AB system has been studied extensively during the past years and a plethora of results are available. The inverse scattering method was developed rigorously by Gibbion et al. to present the $N$-soliton solutions of the AB system\cite{boo-4}. Further the periodic solutions and Whitham equations for the AB system were studied by Kamchatnov et al.\cite{boo-5}, while the envelope solitary waves and periodic waves were further detailed analyzed by Tan et al.\cite{boo-6}. Moreover, the classical Darboux transformation method was developed to present the soliton and breather solutions of the AB system\cite{boo-7}. Those results were generalized by introducing the generalized Darboux transformation to seek the high-order rogue wave solutions\cite{boo-8,boo-9}. In addition, the modulation periodic wave solutions along with the corresponding Whitham equations were also derived via Whitham modulation theory\cite{boo-10}. Following a long development involving many researchers, for the problem where the long-time asymptotic of the solutions of the AB system as $t\rightarrow\infty$, the nonlinear steepset descent method has been introduced in\cite{boo-11}. However, all the researches did not offer any insight about the Cauchy problem of the coupled AB system with initial condition in weighted Sobolev space $H^{1,1}(\mathbb{R})$.

The aim of the present work is to address this challenge and characterize the long-time asymptotic behaviors for the coupled dispersive AB system with the initial value condition
\begin{align}\label{IVC}
A(x,0)=A_0(x),~~~~B(x,0)=B_0(x)\in H^{1,1}(\mathbb{R}),
\end{align}
where the weighted Sobolev space is defined as
\begin{align}
\begin{split}
&H^{1,1}(\mathbb{R})=L^{2,1}(\mathbb{R})\cap H^1(\mathbb{R}),\\
&L^{2,1}(\mathbb{R})=\{(1+|\cdot|^2)^{\frac{1}{2}}f\in L^{2}(\mathbb{R})\},\\
&H^1(\mathbb{R})=\{f\in L^{2}(\mathbb{R})|f'\in L^{2}(\mathbb{R})\}.
\end{split}
\end{align}
We do so by using $\bar{\partial}$ steepest descent method. As far as we know,  in 1974 the long-time asymptotic behavior of equations was first noticed by Manakov\cite{boo-12}. Latter, in 1979 Zakharov and Manakov used the inverse scattering method to give the first long-time asymptotic solution of the NLS equation with initial value decay \cite{boo-13}. The nonlinear steepest descent method was first used to study the long-time asymptotic behavior of the solution for MKdV equation by Defit and Zhou\cite{boo-14}, and was subsequently extended and employed in numerous works, including the study of long-time asymptotic behavior of the NLS equation\cite{boo-15}, KdV equation\cite{boo-16}, Fokas-Lenells equation\cite{boo-17}, coupled NLS equation\cite{boo-18}, the Sasa-Satsuma equation\cite{boo-19}, and other works\cite{boo-20,boo-21,boo-22,boo-23}.
However, the nonlinear steepest descent method need to take the delicate estimates which involving $L_p$ estimates of Cauchy projection operators into consider, the $\bar{\partial}$ steepest descent method was introduced to overcome this challenge, which was first proposed by McLaughlin and Miller\cite{boo-24,boo-25}. Hereafter, the method was also developed rigorously  to study the defocusing NLS under essentially minimal regularity assumptions on finite mass initial data\cite{boo-26} and the defocusing NLS with finite density initial data\cite{boo-27}. With the continuous development of the $\bar{\partial}$ steepest descent method, more and more work has been studied, including KdV equation\cite{boo-28}, focusing NLS equation\cite{boo-29}, short-pluse equation\cite{boo-30}, focusing Fokas-Lenells equation\cite{boo-31}, modified Camassa-Holm equation\cite{boo-32}, Wadati-Konno-Ichikawa\cite{boo-33} and other works\cite{boo-34,boo-35,boo-36,boo-37}.

In this work, we characterize the long time asymptotic behaviors of the solution for the coupled dispersive AB system. As far as we know, Reference\cite{boo-11} has employed the Defit-Zhou steepest descent method to present the long time asymptotics of the coupled dispersive AB system with the initial value $A_0(x),B_{0}(x)\in\mathcal{S}(\mathbb{R})$, which defined as $\mathcal{S}(\mathbb{R})=\{f(x)\in\mathbb{C}^{\infty}(\mathbb{R})|x^\alpha f^{(\beta)}(x)\in L^{\infty}(\mathbb{R}),\alpha,\beta\in\mathbb{Z}^+\}$. The results demonstrate that
\begin{align}
&A=-8Re\bigg(\sqrt{-\frac{\lambda_0}{\alpha t}}(\delta_{\lambda_0}^{0})^2i\beta^X(\lambda_0)\bigg)+O\bigg(\frac{\log t}{t}\bigg),\notag\\
&B=O\bigg(\frac{\log t}{t}\bigg),
\end{align}
where $\lambda_0=\sqrt{\frac{-\alpha t}{4x}}$ is a fixed constant. In our result, we provide the first characterization of the long-time asymptotic behaviors of the solutions for the coupled dispersive AB system with the initial value $A_0(x),B_{0}(x)\in H^{1,1}(\mathbb{R})$.

The rest of this work is organized as follows. Begin with the Lax pair of the AB system, we present the asymptotic property, analyticity and the symmetry property of the eigenfunction in section~\ref{s:2}. In section~\ref{s:3}, we construct the Riemann-Hilbert(RH) problem of the coupled dispersive AB system. The detailed construction process can be found in reference\cite{boo-11}. We introduce the function $\delta(z)$ to construct a new RH problem for $M^{(1)}(z)$ whose jump matrix can be triangulated at phase points $\pm z_0$ in section~\ref{s:4}. Additionally, in order to make continuous extension to the jump matrix of the RH problem for $M^{(1)}(z)$, we introduce the matrix value function $R^{(2)}(z)$ and define a new mixed $\bar{\partial}$-RH problem for $M^{(2)}(z)$.  In section~\ref{s:5}, we divide the $M^{(2)}(z)$ into two parts, including the model RH problem for $M^{(2)}_{RHP}(z)$ with $\bar{\partial}R^{(2)}=0$ and the pure $\bar{\partial}$-RH problem for $M^{(3)}(z)$ with $\bar{\partial}R^{(2)}\neq0$. Further, we consider two scaling transformations to solve $M^{(2)}_{RHP}(z)$. In addition, we show the long-time asymptotic behavior of the $M^{(3)}(z)$. In the end, we characterize the long-time asymptotic behaviors of the coupled dispersive AB system with the aid of the above results in section~\ref{s:6}.
\section{Spectral Analysis}\label{s:2}

In this section, we will focus on constructing the basic Riemann-Hilbert of the system\eqref{AB}. Firstly, we present the spectral analysis of the system\eqref{AB}.
\subsection{Lax pair}
The Lax pair of system\eqref{AB} reads
\begin{align}
\begin{split}
\varphi_x=\tilde{X}\varphi,\\\label{Lax1}
\varphi_t=\tilde{T}\varphi,
\end{split}
\end{align}
where
\begin{align}
\tilde{X}=\left(
    \begin{array}{cc}
      -iz & \frac{\sqrt{\beta\gamma}}{2}A \\
      -\frac{\sqrt{\beta\gamma}}{2}\bar{A} & iz \\
    \end{array}
  \right),~~
\tilde{T}=\left(
    \begin{array}{cc}
      \frac{i(\alpha+\beta B)}{4z} & -\frac{i\sqrt{\beta\gamma}}{4z}A_t \\
      -\frac{i\sqrt{\beta\gamma}}{4z}\bar{A}_t & \frac{-i(\alpha+\beta B)}{4z} \\
    \end{array}
  \right).
\end{align}

In the case of $\beta\gamma=-1$, the Lax pair~\eqref{Lax1} turn into
\begin{align}
\begin{split}
\varphi_x+iz\sigma_3\varphi=X\varphi,\\\label{Lax2}
\varphi_t-\frac{i\alpha}{4z}\sigma_3\varphi=T\varphi,
\end{split}
\end{align}
where
\begin{align}
X=\frac{i}{2}\left(
    \begin{array}{cc}
      0 & A \\
      -\bar{A} & 0 \\
    \end{array}
  \right),~~
T=\frac{1}{4z}\left(
    \begin{array}{cc}
      i\beta B & A_t \\
      \bar{A}_t & -i\beta B \\
    \end{array}
  \right).
\end{align}

\textbf{Case I:} $z\rightarrow\infty$.

In the current case, we choose $\psi(x,t,z)=\varphi(x,t,z)e^{(izx-\frac{i\alpha}{4z}t)\sigma_3}$, then the Lax pair\eqref{Lax2} change into
\begin{align}
\begin{split}
&\psi_x+iz[\sigma_3,\psi]=X\psi,\\\label{Lax3}
&\psi_t-\frac{i\alpha}{4z}[\sigma_3,\psi]=T\psi,
\end{split}
\end{align}
which can be written in full derivative form
\begin{align}\label{4}
d(e^{i(zx-\frac{\alpha}{4z}t)\hat{\sigma}_3}\psi)=e^{i(zx-\frac{\alpha}{4z}t)\hat{\sigma}_3}U(x,t,z)\psi(x,t,z),
\end{align}
where $[\sigma_3,\psi]=\sigma_3\psi-\psi\sigma_3$, $U(x,t,z)=Xdx+Tdt$, $e^{\hat{\sigma}_{3}}A=e^{\sigma_{3}}Ae^{-\sigma_{3}}$ with a $2\times2$ matrix $A$.

\subsection{Asymptotic Property and Analyticity}
In order to construct the RH problem of the solutions for Lax pair\eqref{Lax3}, we need to obtain the solutions of the Lax pair\eqref{Lax3} which approach the $2\times2$ identity matrix as $z\rightarrow\infty$. Thus, we consider the asymptotic expansion of a solution of Lax pair as $z\rightarrow\infty$.

\begin{align}\label{zhankai}
\psi(x,t,z)=\psi^{(0)}(x,t)+\frac{\psi^{(1)}(x,t)}{z}+\frac{\psi^{(2)}(x,t)}{z^2}
+\frac{\psi^{(3)}(x,t)}{z^3}+O(\frac{1}{z^4}),
\end{align}
where $\psi^{(j)}(x,t)$ are independent of $z$.

Substituting \eqref{zhankai} into the Lax pair\eqref{Lax3} and comparing the same power of $z$ yields that $\psi^{(0)}(x,t)$ is the diagonal matrix and $\psi^{(0)}_x(x,t)=\psi^{(0)}_t(x,t)=0$. Further we obtain that $\psi^{(0)}(x,t)$ is a constant matrix and satisfy
\begin{align}
\mathbb{I}=\lim_{z\rightarrow\infty}\lim_{x\rightarrow\infty}\psi(x,t,z)
=\lim_{x\rightarrow\infty}\lim_{z\rightarrow\infty}\psi(x,t,z)
=\lim_{x\rightarrow\infty}\psi^{(0)}(x,t)=\psi^{(0)}(x,t).
\end{align}

The solutions of \eqref{Lax3} can be derived as Volterra integrals
\begin{align}
\begin{split}
\psi_-(x,t,z)=\mathbb{I}+\int_{-\infty}^{x}e^{iz(y-x)\hat{\sigma}_3}X(y,t,z)\psi_-(x,t)dy,\\
\psi_+(x,t,z)=\mathbb{I}-\int_{x}^{+\infty}e^{iz(y-x)\hat{\sigma}_3}X(y,t,z)\psi_+(x,t)dy,
\end{split}
\end{align}
from which we can derive the analytical of $\psi_{\pm}(x,t,z)$.

\begin{prop}\label{5}
$\psi_{\pm}(x,t,z)$ satisfy the following properties:
\begin{itemize}
  \item $\psi^1_-(x,t,z)$ is analytic in $Im~z>0$ and $\psi^1_-(x,t,z)=\left(
                                                                               \begin{array}{c}
                                                                                 1 \\
                                                                                 0 \\
                                                                               \end{array}
                                                                             \right)+O(\frac{1}{z})$ as
  $z\rightarrow\infty$ and $Im~z\geq0$.
  \item $\psi^2_-(x,t,z)$ is analytic in $Im~z<0$ and $\psi^2_-(x,t,z)=\left(
                                                                               \begin{array}{c}
                                                                                 0 \\
                                                                                 1 \\
                                                                               \end{array}
                                                                             \right)+O(\frac{1}{z})$ as
  $z\rightarrow\infty$ and $Im~z\leq0$.
  \item $\psi^1_+(x,t,z)$ is analytic in $Im~z<0$ and $\psi^1_+(x,t,z)=\left(
                                                                               \begin{array}{c}
                                                                                 1 \\
                                                                                 0 \\
                                                                               \end{array}
                                                                             \right)+O(\frac{1}{z})$ as
  $z\rightarrow\infty$ and $Im~z\leq0$.
  \item $\psi^2_+(x,t,z)$ is analytic in $Im~z>0$ and $\psi^2_+(x,t,z)=\left(
                                                                               \begin{array}{c}
                                                                                 0 \\
                                                                                 1 \\
                                                                               \end{array}
                                                                             \right)+O(\frac{1}{z})$ as
  $z\rightarrow\infty$ and $Im~z\geq0$,
\end{itemize}
where $\psi^1_{\pm}(x,t,z)$ and $\psi^2_{\pm}(x,t,z)$ represent the first and second columns of $\psi_{\pm}(x,t,z)$, respectively.
\end{prop}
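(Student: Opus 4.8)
The plan is to reduce the matrix Volterra equations to scalar integral equations for the individual column entries, read off the exponential factors that arise from the conjugation by $e^{iz(y-x)\hat{\sigma}_3}$, and then run a Neumann (Picard) iteration whose convergence and analyticity are controlled entirely by $\|A\|_{L^1}$, which is finite because $A_0\in H^{1,1}(\mathbb{R})\hookrightarrow L^1(\mathbb{R})$. Throughout I read the integrand in the Volterra equations as $X(y,t)\psi_{\pm}(y,t,z)$ (the natural iterative form).

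First I would make the conjugation explicit. Since $X$ is off-diagonal, the operator $e^{iz(y-x)\hat{\sigma}_3}$ multiplies the $(1,2)$ entry of $X\psi_-$ by $e^{2iz(y-x)}$ and the $(2,1)$ entry by $e^{-2iz(y-x)}$. Writing the entries of the first column of $\psi_-$ yields
\begin{align*}
(\psi^1_-)_1(x,t,z) &= 1 + \frac{i}{2}\int_{-\infty}^{x} A(y,t)\,(\psi^1_-)_2(y,t,z)\,dy,\\
(\psi^1_-)_2(x,t,z) &= -\frac{i}{2}\int_{-\infty}^{x} e^{2iz(x-y)}\,\bar{A}(y,t)\,(\psi^1_-)_1(y,t,z)\,dy.
\end{align*}
Because the integration runs over $y<x$, we have $x-y>0$, so writing $z=\zeta+i\eta$ gives $|e^{2iz(x-y)}|=e^{-2\eta(x-y)}\le 1$ precisely when $\eta=\text{Im}\,z\ge 0$. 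This boundedness in the closed upper half-plane is exactly the source of analyticity of $\psi^1_-$ for $\text{Im}\,z>0$. The identical computation on the second column produces the conjugate factor $e^{2iz(y-x)}$ with $y-x<0$, bounded for $\text{Im}\,z\le 0$; for $\psi_+$ the integration over $y>x$ reverses every sign, which accounts for the four half-plane assignments in the statement.

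Next I would establish existence, analyticity, and the asymptotic normalization by the standard Volterra fixed-point argument. Denoting the integral operator by $K$, one bounds the $n$-th Picard iterate by $\frac{1}{n!}\bigl(\frac12\int_{\mathbb{R}}|A(y,t)|\,dy\bigr)^{n}$, using $|A|=|\bar{A}|$ and the fact that the exponential factor has modulus at most $1$ in the relevant half-plane; hence the Neumann series is dominated by $\exp\bigl(\frac12\|A\|_{L^1}\bigr)$ and converges uniformly on compact subsets of that half-plane. Each iterate is entire in $z$ for fixed $(x,t)$, so by the uniform-limit theorem (or Morera) the limit is analytic in the open half-plane and continuous up to the real axis. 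The constant vectors $\binom{1}{0}$ and $\binom{0}{1}$ in the stated asymptotics come directly from the zeroth iterate $\mathbb{I}$.

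Finally, for the behavior as $z\to\infty$ I would apply the Riemann–Lebesgue lemma to the off-diagonal entries, which carry the oscillatory factor $e^{\pm 2iz(x-y)}$, to obtain $o(1)$ decay, and then upgrade this to the claimed $O(1/z)$ by a single integration by parts—legitimate since the $H^{1,1}$ control provides $A\in L^1$ together with $A'\in L^2$. The diagonal entries then converge to their limits ($1$, respectively $0$) at the same rate. I expect the main obstacle to be twofold: securing the \emph{uniform} (in $z$) boundedness of the exponential kernel on the \emph{closed} half-plane so that the $L^1$ bound driving the Neumann series is $z$-independent and analyticity transfers cleanly to the limit; and producing the sharp $O(1/z)$ rate rather than mere $o(1)$, which is precisely where the extra derivative furnished by $H^{1,1}$ enters through the integration by parts.
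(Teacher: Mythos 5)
Your proposal is correct and follows essentially the same route the paper takes: the paper merely records the Volterra integral representations for $\psi_{\pm}$ and asserts the proposition without further argument, and your sign analysis of the conjugated kernel $e^{\pm 2iz(x-y)}$, the $\frac{1}{n!}$-dominated Neumann series bounded via $\|A\|_{L^1}$ (available since $H^{1,1}(\mathbb{R})\hookrightarrow L^{1}(\mathbb{R})$), and the Morera/uniform-limit step are precisely the standard details behind that assertion. The one place you go beyond the paper is the integration-by-parts upgrade from $o(1)$ to $O(\frac{1}{z})$, where you should be slightly careful: with only $A'\in L^{2}$ the remainder term $\frac{1}{z}\int_{-\infty}^{x}e^{2iz(x-y)}\bar{A}'(y)\,dy$ is controlled pointwise only for $\mathrm{Im}\,z$ bounded away from $0$ (or in an $L^{2}_{z}$-averaged sense on the boundary $\mathrm{Im}\,z=0$), a uniformity issue the paper itself leaves unaddressed, since pointwise $O(\frac{1}{z})$ on the closed half-plane would classically require $A'\in L^{1}$.
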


\subsection{Scattering Matrix and Reflection Coefficient}

Since the solution $\psi_{\pm}(x,t,z)$ solve \eqref{Lax3}, there is a $2\times2$ matrix $S(z)=(s_{ij})_{2\times2}$ which is independent on $x$ and $t$ satisfies the following proposition.

\begin{prop}
There is a $2\times2$ matrix $S(z)=(s_{ij})_{2\times2}$ which is independent on $x$ and $t$ such that $\psi_{\pm}(x,t,z)$ satisfy
\begin{align}\label{8}
\psi_-(x,t,z)=\psi_+(x,t,z)e^{-it\theta(z)\hat{\sigma}_3}S(z),~~\theta(z)=z\frac{x}{t}-\frac{\alpha}{4z},
~~z\in\mathbb{R},
\end{align}
where $s_{ij}(z)$ are called scattering coefficients and can be expressed as
\begin{align*}
&s_{11}(z)=\det(\psi_-^1(x,t,z),\psi_+^2(x,t,z)),&s_{22}&=\det(\psi_+^1(x,t,z),\psi_-^2(x,t,z)),\\
&s_{12}(z)=\det(\psi_-^1(x,t,z),\psi_+^1(x,t,z)),&s_{21}&=-\det(\psi_+^2(x,t,z),\psi_-^2(x,t,z)).
\end{align*}
The analyticity of scattering coefficients are given with the aid of Proposition~\ref{5}, namely, $s_{11}(z)$ is analytic in $Im~z>0$ and $s_{22}(z)$ is analytic in $Im~z<0$, whereas $s_{12}(z)$ and $s_{21}(z)$ are not analytic.
\end{prop}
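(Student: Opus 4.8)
The plan is to undo the exponential normalization and work with the undressed Jost matrices $\Phi_\pm(x,t,z):=\psi_\pm(x,t,z)e^{-it\theta(z)\sigma_3}$, each of which is a fundamental matrix solution of the Lax pair \eqref{Lax1} in the form $\Phi_{\pm,x}=\tilde{X}\Phi_\pm$, $\Phi_{\pm,t}=\tilde{T}\Phi_\pm$. A one-line computation using \eqref{Lax3} gives $\Phi_{\pm,x}=(X-iz\sigma_3)\Phi_\pm=\tilde{X}\Phi_\pm$. First I would note that $\operatorname{tr}\tilde{X}=0$, so Abel's identity makes $\det\Phi_\pm$ independent of $x$; letting $x\to\mp\infty$, where $\psi_\pm\to\mathbb{I}$ and hence $\Phi_\pm\to e^{-it\theta\sigma_3}$, gives $\det\Phi_\pm\equiv 1$. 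Thus each $\Phi_\pm$ is invertible for $z\in\mathbb{R}$ and $S(z):=\Phi_+^{-1}\Phi_-$ is well defined.

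Next I would show that $S$ is independent of $x$. Differentiating and inserting the $x$-equation gives $\partial_x(\Phi_+^{-1}\Phi_-)=-\Phi_+^{-1}\tilde{X}\Phi_-+\Phi_+^{-1}\tilde{X}\Phi_-=0$. Rewriting $\Phi_-=\Phi_+S$ back in terms of $\psi_\pm=\Phi_\pm e^{it\theta\sigma_3}$ yields $\psi_-=\psi_+e^{-it\theta\sigma_3}Se^{it\theta\sigma_3}=\psi_+e^{-it\theta\hat{\sigma}_3}S$, which is exactly \eqref{8}; the conjugation by $e^{-it\theta\sigma_3}$ is precisely what produces the $\hat{\sigma}_3$ dressing, using $t\theta=zx-\tfrac{\alpha}{4z}t$.

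The step I expect to be the main obstacle is the independence of $S$ on $t$. Since $\psi_\pm$ are constructed only from the $x$-equation, one must first verify that $\Phi_\pm$ also satisfy $\Phi_{\pm,t}=\tilde{T}\Phi_\pm$. I would set $W:=\Phi_{\pm,t}-\tilde{T}\Phi_\pm$ and, combining $\Phi_{\pm,x}=\tilde{X}\Phi_\pm$ with the zero-curvature relation $\tilde{X}_t-\tilde{T}_x+[\tilde{X},\tilde{T}]=0$ (valid exactly because $(A,B)$ solves \eqref{AB}), compute $W_x=\tilde{X}W$. Hence $W$ solves the same homogeneous linear $x$-system. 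Because $A,A_t,B\to 0$ as $x\to\mp\infty$ one has $\tilde{T}\to\frac{i\alpha}{4z}\sigma_3$, and the limiting profile $e^{-it\theta\sigma_3}$ satisfies $\partial_t e^{-it\theta\sigma_3}=\frac{i\alpha}{4z}\sigma_3 e^{-it\theta\sigma_3}$ exactly, so $W\to 0$ at the appropriate infinity; uniqueness for the linear ODE then forces $W\equiv 0$. With the $t$-equation established, the same cancellation gives $\partial_t(\Phi_+^{-1}\Phi_-)=0$, so $S=S(z)$ depends on neither $x$ nor $t$.

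Finally, the entries of $S$ follow from Cramer's rule applied to $\Phi_-=\Phi_+S$ written columnwise, the normalization $\det\Phi_\pm=1$ clearing all prefactors; for the diagonal entries the exponential dressings cancel between the two paired columns, so one may write $s_{11}=\det(\psi_-^1,\psi_+^2)$ and $s_{22}=\det(\psi_+^1,\psi_-^2)$ directly in terms of the $\psi$-columns, and $s_{12},s_{21}$ are recovered from the remaining column pairings in the same way (now with the oscillatory factors combining rather than cancelling). The analyticity statements are then immediate from Proposition~\ref{5}: $s_{11}$ is a determinant of two columns analytic in $\operatorname{Im}z>0$, hence analytic there; $s_{22}$ is analytic in $\operatorname{Im}z<0$ for the same reason; while $s_{12}$ and $s_{21}$ pair one column analytic in the upper and one in the lower half-plane, so they carry no analytic continuation off $\mathbb{R}$.
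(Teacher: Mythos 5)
Your argument is correct and complete, and it is worth noting that the paper itself offers no proof of this proposition: it is simply asserted immediately after the Volterra construction of $\psi_{\pm}$. What you supply --- Abel's identity for the traceless coefficient $\tilde{X}$ to get $\det\Phi_{\pm}\equiv1$, constancy in $x$ of $\Phi_{+}^{-1}\Phi_{-}$, the zero-curvature computation $W_{x}=\tilde{X}W$ with $W=\Phi_{\pm,t}-\tilde{T}\Phi_{\pm}$ to propagate the $t$-equation to the Jost solutions (the genuinely nontrivial step, since $\psi_{\pm}$ are built from the $x$-equation alone), and Cramer's rule for the entries --- is exactly the standard argument the paper leaves implicit. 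Two remarks. First, a harmless index slip: with the paper's Volterra integrals, $\psi_{\pm}\to\mathbb{I}$ as $x\to\pm\infty$, not $x\to\mp\infty$; this does not affect the structure of your argument. Second, your parenthetical observation that for the off-diagonal entries the oscillatory factors ``combine rather than cancel'' is sharper than the statement you were asked to prove: writing $\psi_{-}=\psi_{+}e^{-it\theta\hat{\sigma}_3}S$ columnwise gives
\begin{align*}
\psi_{-}^{1}=s_{11}\psi_{+}^{1}+s_{21}e^{2it\theta}\psi_{+}^{2},\qquad
\psi_{-}^{2}=s_{12}e^{-2it\theta}\psi_{+}^{1}+s_{22}\psi_{+}^{2},
\end{align*}
so that $s_{21}=-e^{-2it\theta}\det(\psi_{-}^{1},\psi_{+}^{1})$ and $s_{12}=-e^{2it\theta}\det(\psi_{+}^{2},\psi_{-}^{2})$; the paper's displayed formulas for $s_{12}$ and $s_{21}$ have these two indices interchanged and omit the factors $e^{\pm2it\theta}$, without which their right-hand sides would depend on $(x,t)$, contradicting the $(x,t)$-independence of $S$ that the proposition itself asserts. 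The diagonal formulas, where the exponentials cancel as you say, and all the analyticity conclusions are unaffected: your half-plane pairing argument for why $s_{11}$ and $s_{22}$ extend analytically while $s_{12}$ and $s_{21}$ live only on $\mathbb{R}$ is the intended one.
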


In addition, we define the reflection coefficient
\begin{align}
r(z)=\frac{s_{21}(z)}{s_{11}(z)}.
\end{align}

\subsection{Symmetry Property}
\begin{prop}\label{prop2.3}
Solutions $\psi_{\pm}(x,t,z)$, scattering matrix $S(z)$ and reflection coefficient $r(z)$ satisfy the following symmetry
\begin{align}
\psi_{\pm}(x,t,k)=\sigma_1\overline{\psi_{\pm}(x,t,\bar{z})}\sigma_1,~~S(z)=\sigma_1\overline{S(\bar{z})}\sigma_1,
~~r(z)=\sigma_1\overline{r(\bar{z})}\sigma_1,
\end{align}
where $\sigma_1=\left(
                  \begin{array}{cc}
                    0 & 1 \\
                    1 & 0 \\
                  \end{array}
                \right)
$.
\end{prop}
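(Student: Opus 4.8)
The plan is to establish the three symmetries in order, obtaining the scattering and reflection symmetries as consequences of the symmetry of the eigenfunctions, which is itself proved by a uniqueness argument. First I would define $\hat\psi_\pm(x,t,z):=\sigma_1\overline{\psi_\pm(x,t,\bar z)}\sigma_1$ and show that $\hat\psi_\pm$ solves the same Volterra integral equation as $\psi_\pm$ with the same normalization at $x=\mp\infty$. Since that integral equation has a unique solution, this forces $\hat\psi_\pm=\psi_\pm$, which is precisely the first claimed identity.

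The algebraic heart of the argument is the verification that the coefficient matrices of the Lax pair~\eqref{Lax3} are invariant under the involution $M\mapsto\sigma_1\overline{M}\sigma_1$ combined with the spectral reflection $z\mapsto\bar z$. Because $A$ is complex while $B$ is real and $\alpha,\beta$ are real parameters, a direct computation using $\sigma_1^2=\mathbb I$ gives $\sigma_1\overline{X(x,t)}\sigma_1=X(x,t)$ (a pure reality condition, as $X$ carries no $z$) and $\sigma_1\overline{T(x,t,\bar z)}\sigma_1=T(x,t,z)$, where the conjugation turns $1/(4\bar z)$ into $1/(4z)$, swaps the off-diagonal entries $A_t,\bar A_t$, and flips the sign of the diagonal $\pm i\beta B$. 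To propagate this to $\psi_\pm$, I would write the $x$-equation at the reflected point $\bar z$, take complex conjugates, and then conjugate by $\sigma_1$. The delicate point is the commutator term: using $\overline{i\bar z}=-iz$ together with $\sigma_1\sigma_3\sigma_1=-\sigma_3$ one finds $\sigma_1[\sigma_3,\,\overline{\psi_\pm(\bar z)}]\sigma_1=-[\sigma_3,\hat\psi_\pm]$, so the two sign changes combine and the term $iz[\sigma_3,\hat\psi_\pm]$ reappears with the correct sign. The identical computation with $\sigma_1\overline{T(\bar z)}\sigma_1=T(z)$ handles the $t$-equation, and the normalization is preserved since $\sigma_1\overline{\mathbb I}\sigma_1=\mathbb I$.

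With the eigenfunction symmetry in hand, I would read it columnwise as $\psi_\pm^1(z)=\sigma_1\overline{\psi_\pm^2(\bar z)}$ and $\psi_\pm^2(z)=\sigma_1\overline{\psi_\pm^1(\bar z)}$, and substitute into the Wronskian representations of the $s_{ij}$. Using $\det\sigma_1=-1$ and $\det(\overline u,\overline v)=\overline{\det(u,v)}$, each determinant formula maps to the conjugate of another, and these assemble into the matrix identity $S(z)=\sigma_1\overline{S(\bar z)}\sigma_1$; here one also uses that $\theta(z)=z\,x/t-\alpha/(4z)$ satisfies $\theta(\bar z)=\overline{\theta(z)}$ for real $x,t,\alpha$, so the factor $e^{-it\theta\hat\sigma_3}$ in~\eqref{8} is compatible with the involution (again via $\sigma_1\sigma_3\sigma_1=-\sigma_3$). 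The reflection coefficient symmetry then follows by inserting the componentwise relations for $s_{11}$ and $s_{21}$ into $r=s_{21}/s_{11}$.

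The step I expect to be the main obstacle is the bookkeeping in the second paragraph: one must simultaneously track the complex conjugation, the spectral reflection $z\mapsto\bar z$, and the $\sigma_1$-conjugation, and check that the sign changes in the commutator (and in the exponential for the scattering relation) cancel so that $\hat\psi_\pm$ solves the \emph{same} equation rather than a sign-flipped one. A secondary consistency check worth recording is that the columnwise symmetry respects the analyticity domains of Proposition~\ref{5}: for instance $\psi_-^1(z)$ is analytic in $\mathrm{Im}\,z>0$, and so is $\sigma_1\overline{\psi_-^2(\bar z)}$, since $\psi_-^2$ is analytic in $\mathrm{Im}\,z<0$ and conjugation in $\bar z$ reverses the half-plane.
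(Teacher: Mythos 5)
Your proposal is correct, and there is nothing in the paper to compare it against: Proposition \ref{prop2.3} is stated without any proof, so your argument fills a genuine omission rather than paralleling an existing one. The route you take is the standard one and your sign bookkeeping checks out: $\sigma_1\overline{X}\sigma_1=X$ and $\sigma_1\overline{T(\bar z)}\sigma_1=T(z)$ hold exactly as you describe (using that $B,\alpha,\beta$ are real), and in the commutator term the two sign flips from $\overline{i\bar z}=-iz$ and $\sigma_1\sigma_3\sigma_1=-\sigma_3$ do cancel, so $\hat\psi_\pm:=\sigma_1\overline{\psi_\pm(\cdot,\cdot,\bar z)}\sigma_1$ solves the same Volterra equation and uniqueness gives the first identity. (One minor slip: $\psi_\pm$ are normalized at $x\to\pm\infty$, not $\mp\infty$; this does not affect the argument.)

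Two caveats, both traceable to misprints in the paper rather than to your reasoning. First, the paper's Wronskian formulas for the off-diagonal entries are garbled: from \eqref{8} one actually has $s_{21}=-e^{-2it\theta}\det(\psi_-^1,\psi_+^1)$ and $s_{12}=e^{2it\theta}\det(\psi_-^2,\psi_+^2)$, so inserting the column symmetries $\psi_\pm^1(z)=\sigma_1\overline{\psi_\pm^2(\bar z)}$ into the printed ($\theta$-free) formulas literally produces a spurious sign in the off-diagonal relations. With the exponentials restored your assembly works (one checks $\overline{e^{-2it\theta(\bar z)}}=e^{2it\theta(z)}$, consistent with your observation $\overline{\theta(\bar z)}=\theta(z)$); alternatively, and more robustly, conjugate the matrix relation \eqref{8} directly, using $\sigma_1e^{-it\theta\sigma_3}\sigma_1=e^{it\theta\sigma_3}$ and cancelling $\psi_+$, which is invertible since the Lax matrices are traceless and hence $\det\psi_\pm\equiv1$. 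Second, the scalar claim as printed, $r(z)=\sigma_1\overline{r(\bar z)}\sigma_1$, is ill-formed since $r$ is scalar, and it cannot mean literal reality of $r$ on $\mathbb{R}$. What your insertion of $s_{11}(z)=\overline{s_{22}(\bar z)}$ and $s_{21}(z)=\overline{s_{12}(\bar z)}$ into $r=s_{21}/s_{11}$ actually proves is $r(z)=\overline{s_{12}(\bar z)/s_{22}(\bar z)}$, i.e.\ the second reflection coefficient $s_{12}/s_{22}$ equals $\overline{r(\bar z)}$ --- precisely the fact that licenses writing the jump matrix \eqref{7} in terms of $r(z)$ and $\overline{r(\bar z)}$ alone. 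State that conclusion explicitly and your proof is complete; your closing cross-check that the column symmetry swaps the half-planes of analyticity in Proposition \ref{5} is also accurate.
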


\textbf{Case II:} $z\rightarrow0$.

In the case of $z\rightarrow0$, we choose $\psi^0(x,t,z)=\varphi(x,t,z)e^{(izx-\frac{i\alpha}{4z}t)\sigma_3}$, then the Lax pair~\eqref{Lax2} change into
\begin{align}
\begin{split}
&\psi^0_x+iz[\sigma_3,\psi^0]=X\psi^0,\\\label{Lax3}
&\psi^0_t-\frac{i\alpha}{4z}[\sigma_3,\psi^0]=T\psi^0,
\end{split}
\end{align}
which can be written in full derivative form
\begin{align}\label{4}
d(e^{i(zx-\frac{\alpha}{4z}t)\hat{\sigma}_3}\psi^0(x,t,z))
=e^{i(zx-\frac{\alpha}{4z}t)\hat{\sigma}_3}U(x,t,z)\psi^0(x,t,z),
\end{align}
where $[\sigma_3,\psi^0]=\sigma_3\psi^0-\psi^0\sigma_3$, $U(x,t,z)=Xdx+Tdt$, $e^{\hat{\sigma}_{3}}A=e^{\sigma_{3}}Ae^{-\sigma_{3}}$ with a $2\times2$ matrix $A$.

In the current case, we consider the following asymptotic expansion
\begin{align}\label{expansion1}
\psi^0(x,t,z)=\psi_0^{(0)}(x,t)+\psi_0^{(1)}(x,t)z+\psi_0^{(2)}(x,t)z^2
+O(z^3).
\end{align}

We substitute \eqref{expansion1} into Lax pair\eqref{Lax3} and compare the same power of $z$ derive that
\begin{align*}
&\psi_{0,x}^{(0)}=X\psi_0^{(0)},~~\psi_{0,t}^{(0)}-\frac{i\alpha}{4}[\sigma_3,\psi_0^{(1)}]=zT\psi_0^{(1)},
-\frac{i\alpha}{4}[\sigma_3,\psi_0^{(0)}]=zT\psi_0^{(0)},\\
&\psi_{1,x}^{(0)}+i[\sigma_3,\psi_0^{(0)}]=X\psi_0^{(1)},~~
\psi_{1,t}^{(0)}-\frac{i\alpha}{4}[\sigma_3,\psi_0^{(2)}]=zT\psi_0^{(2)}.
\end{align*}

\textbf{Remark 1:} We do the same transformation as $z\rightarrow0$ and $z\rightarrow\infty$, so the analytically and the symmetry property of $\psi^0(x,t,z)$ as $z\rightarrow0$ are same with the case of $z\rightarrow\infty$, for detail see the case $z\rightarrow\infty$.

\textbf{Remark 2:} We ignore the analysis near the neighborhood of $z=0$ and refer to Proposition~\ref{prop4.1} for further explanation.

In addition, we give the following result.
\begin{lemma}
The reflection coefficient $r(z)\in H^{1,1}(\mathbb{R})$ provided that the initial datum $A_0(x), B_0(x)\in H^{1,1}(\mathbb{R})$.
\end{lemma}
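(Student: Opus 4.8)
The plan is to exploit that the scattering matrix $S(z)$, being independent of $x$ and $t$, may be computed at $t=0$ from the spatial part of the Lax pair alone. Setting $t=0$ in \eqref{Lax2}, the relevant problem is $\psi_x+iz\sigma_3\psi=X\psi$ with $X=\frac{i}{2}\left(\begin{smallmatrix} 0 & A_0 \\ -\bar{A}_0 & 0 \end{smallmatrix}\right)$; note that this problem depends only on the initial datum $A_0$ (the field $B_0$ enters only through the temporal generator $T$ and hence does not affect $r$), so it is precisely a Zakharov--Shabat (AKNS) spectral problem of NLS type, whose off-diagonal potential entries are related by complex conjugation (cf. Proposition~\ref{prop2.3}). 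First I would write the Jost functions $\mu_\pm(x,z)=\psi_\pm(x,0,z)$ as solutions of the Volterra integral equations and, using the determinant representations for $s_{ij}$, extract the integral formulas
\[
 s_{11}(z)=1+\text{(quadratic correction)},\qquad s_{21}(z)=-\tfrac{i}{2}\aaa e^{2izx}\bar{A}_0(x)\,[\mu_-(x,z)]_{11}\,dx .
\]
The leading contribution to $s_{21}$ is, up to a constant, the Fourier transform of $\bar{A}_0$.

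Next I would prove $s_{21}\in H^{1,1}(\R)$. Since $A_0\in H^{1,1}(\R)$ and the Fourier transform maps $H^{1,1}(\R)$ into itself (it exchanges weight and derivative, sending $L^{2,1}$ to $H^1$ and $H^1$ to $L^{2,1}$), the leading Fourier term already lies in $H^{1,1}$ by Plancherel. For the weight estimate $z\,s_{21}(z)\in L^2$ I would integrate by parts in $x$, writing $z\,e^{2izx}=\frac{1}{2i}\partial_x e^{2izx}$, which transfers the factor of $z$ onto $\partial_x\!\big(\bar{A}_0\,[\mu_-]_{11}\big)$ and therefore consumes $A_0'\in L^2$; for the derivative estimate $s_{21}'\in L^2$ I would differentiate in $z$ using $\partial_z e^{2izx}=2ix\,e^{2izx}$, which brings down a factor $x$ and thus consumes $xA_0\in L^2$, i.e. $A_0\in L^{2,1}$. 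In both cases Plancherel converts the resulting $x$-integrals into $L^2_z$ bounds, provided the correction factors $\mu_--\mathbb{I}$, $\partial_x\mu_-$ and $\partial_z\mu_-$ are suitably controlled.

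The factor $s_{11}$ is handled separately. By Proposition~\ref{5} it is analytic in $\mathrm{Im}\,z>0$ with $s_{11}\to 1$ as $z\to\infty$; because we work in the regime without discrete spectrum and with $s_{11}$ nonvanishing on the real axis, $1/s_{11}$ is bounded with bounded derivative. The same integral representation, together with the estimates above, yields $s_{11}-1\in H^{1,1}(\R)$. Since $H^1(\R)$ is a Banach algebra continuously embedded in $L^\infty(\R)$, and $H^{1,1}(\R)$ is stable under multiplication by such bounded functions and under forming quotients with a nonvanishing denominator, the reflection coefficient $r=s_{21}/s_{11}$ inherits membership in $H^{1,1}(\R)$.

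The main obstacle is the uniform control of the Jost function $\mu_-(x,z)$ and, above all, of its $z$-derivative $\partial_z\mu_-$ in the weighted norms required above. These bounds must be extracted from the Volterra equation by Neumann-series and Gronwall-type estimates, and it is exactly here that the two halves of the hypothesis $A_0\in H^{1,1}(\R)=L^{2,1}(\R)\cap H^1(\R)$ are consumed: the $L^{2,1}$ part guarantees finiteness of the $x$-moment integrals arising from $\partial_z$, while the $H^1$ part controls the $\partial_x$-integrals arising from the integration by parts. Once these a priori estimates on $\mu_-$ are in hand, the membership $r\in H^{1,1}(\R)$ follows by assembling the pieces.
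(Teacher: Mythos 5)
The paper offers no proof of this lemma at all --- it is stated bare, implicitly imported from the standard direct-scattering theory for Zakharov--Shabat systems (Deift--Zhou's weighted-Sobolev NLS analysis and Zhou's $L^2$-Sobolev bijectivity results) --- so there is no in-text argument to compare yours against; what can be judged is whether your reconstruction is the right one, and in outline it is. You correctly observe that $S(z)$ is $t$-independent and that the $x$-part of the Lax pair contains only $A$, so the scattering data may be computed at $t=0$ from a pure Zakharov--Shabat problem with potential built from $A_0$ alone, $B_0$ being invisible to $r$; your integral representations are structurally right (in the paper's conventions one finds $s_{21}(z)=-\frac{i}{2}\int_{\R} e^{-2izy}\bar{A}_0(y)\,[\psi_-^1]_1(y,z)\,dy$, so your exponent $e^{2izx}$ is a harmless sign slip), and the bookkeeping of how the Fourier transform exchanges derivative and weight --- integration by parts consuming $A_0'\in L^2$ for the bound on $z\,s_{21}$, the factor $2ix$ consuming $xA_0\in L^2$ for $s_{21}'$ --- is exactly how the two halves of $H^{1,1}=H^1\cap L^{2,1}$ enter in the literature.

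Two refinements are worth making. First, your quotient step is cleaner than you state it: the symmetry of Proposition \ref{prop2.3} gives $s_{22}=\bar{s}_{11}$ and $s_{12}=\bar{s}_{21}$ on $\R$, whence $\det S=1$ yields $|s_{11}(z)|^2=1+|s_{21}(z)|^2\geq 1$, so $1/s_{11}$ is automatically bounded and $\sup_{\R}|r|<1$; no separate ``no discrete spectrum'' assumption is needed for this step (this identity is also what underlies the paper's assertion that $s_{11}\neq 0$ and the well-definedness of $\upsilon$ and $\delta$). Second, the genuine technical content --- which you flag honestly but do not carry out --- is the uniform control of $\mu_\pm$ and the weighted $L^2_z$ estimates for $\partial_z\mu_\pm$: note that $L^{2,1}(\R)\subset L^1(\R)$ by Cauchy--Schwarz against $(1+x^2)^{-1/2}$, so the Volterra--Neumann series converges absolutely and gives $\|\mu_\pm\|_{L^\infty}\lesssim e^{c\|A_0\|_{L^1}}$, after which the $\partial_z$-estimates follow the Deift--Zhou scheme you cite. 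So taken as a plan, your proposal is sound and actually supplies an argument the paper leaves entirely open; taken as a proof, it remains a sketch whose Volterra-equation estimates would still have to be written out.
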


\section{Construction of the basic RH Problem}\label{s:3}
In combination with the above properties, we propose the basic RH problem.

Define a sectionally meromorphic matrix

\begin{align}\label{6}
M(x,t;z)=\left\{\begin{aligned}
&M_{+}(x,t;z)=\left(\frac{\psi_-^1}{s_{11}},\psi_+^2\right), \quad z\in\{z\in\mathbb{C}|Im~z>0\},\\
&M_{-}(x,t;z)=\left(\psi_+^1,\frac{\psi_-^2}{s_{22}}\right), \quad z\in\{z\in\mathbb{C}|Im~z<0\},
\end{aligned}\right.
\end{align}
where
\begin{align}
M_{\pm}(x,t,z)=\lim_{\varepsilon\rightarrow0^{+}}M(x,t,z\pm i\varepsilon),~~\varepsilon\in\mathbb{R}.
\end{align}
Based on Proposition~\ref{prop2.3}, we derive that $s_{11}(z)\neq0$ for all $z\in\mathbb{R}\cup\{z\in\mathbb{C}|Im~z>0\}$. Further the RH problem is presented as following:
\begin{RHP}\label{RHP1}
Find a matrix value function $M(z)$ admits:
\begin{itemize}
  \item Analyticity:~$M(x,t,z)$ is analytic in $\mathbb{C}\setminus\mathbb{R}$;
  \item Jump condition:
  \begin{align}
  M_{+}(x,t,z)=M_{-}(x,t,z)J(x,t,z),~~~z\in\mathbb{R},
  \end{align}
  where
  \begin{align}\label{7}
  J(x,t,z)=e^{-it\theta\hat{\sigma}_3}\left(\begin{array}{cc}
                   1-|r(z)|^2 & -\overline{r(\bar{z})} \\
                   r(z) & 1
                 \end{array}\right);
  \end{align}
  \item Asymptotic behavior:
  \begin{align}
  M(x,t,z)\rightarrow\mathbb{I}~~as~~z\rightarrow\infty.
  \end{align}
\end{itemize}
\end{RHP}

Furthermore, in terms of the expansion~\eqref{zhankai} of $\psi(x,t,z)$, we obtain
\begin{gather*}
\psi^{(0)}_x+i[\sigma_3,\psi^{(1)}]=X\psi^{(0)},\\
\psi^{(1)}_t-i\frac{\alpha}{4}[\sigma_3,\psi^{(1)}]=zT\psi^{(0)}.
\end{gather*}
Thus, we conclude
\begin{align}
&A=4\lim_{z\rightarrow\infty}\big(zM\big)_{12},\\
&B=-\frac{4i}{\beta}\lim_{z\rightarrow\infty}\frac{d}{dt}\big(zM\big)_{11}.
\end{align}

\bigskip
\
\section{Conjugation}\label{s:4}

In the scheme of $\alpha x>0$, we choose $\alpha<0$ and $x<0$. It can be found that the solutions of the initial value problem is rapidly decayed\cite{boo-11}. Now, we consider that $\alpha x<0$. In the current case, we set $\alpha<0$ and $x>0$. (Taking the similar method can obtain the result under the case of $\alpha>0$ and $x<0$.)  We rewrite the oscillation term $e^{2it\theta}$ as $e^{t\vartheta}$ where $\vartheta(z)=2i(\frac{x}{t}z-\frac{\alpha}{4z})$. And $\theta(z)$ can be rewritten as $\theta(z)=-\frac{\alpha z}{4}(\frac{1}{z_0^2}+\frac{1}{z^2})$. Further we obtain the two critical points $\pm z_0=\pm\sqrt{-\frac{\alpha t}{4x}}$. Thus we can derive the decaying domains of the oscillation term, which are shown in Fig~\ref{fig1}. In this work, we only consider the case of $t\rightarrow+\infty$.

\begin{center}
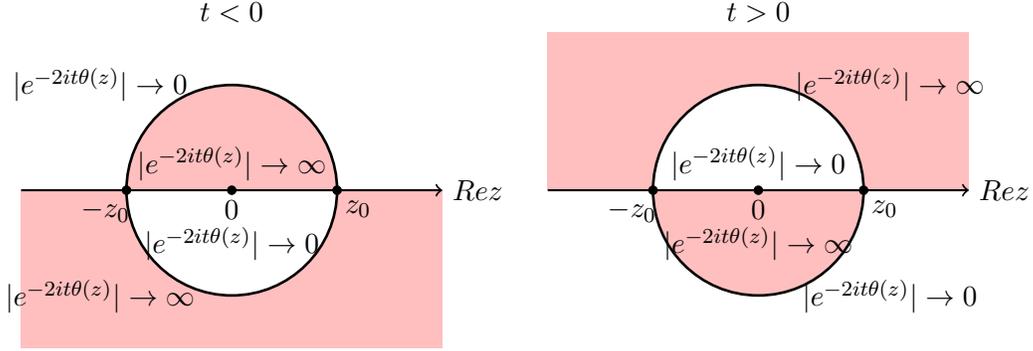
\begin{figure}
\begin{tikzpicture}[scale=0.7]
\path [fill=pink] (-1,0) -- (-9,0) to (-9,-3) -- (-1,-3);
\filldraw[pink, line width=0.5](-3,0) arc (0:180:2);
\filldraw[white, line width=0.5](-7,0) arc (-180:0:2);
\draw[->][thick](-9,0)--(-1,0);
\draw[fill] (-1,0)node[right]{$Rez$};
\draw[fill] (-5,0)node[below]{$0$} circle [radius=0.08];
\draw[fill] (-7.4,0)node[below]{$-z_{0}$};
\draw[fill] (-2.6,0)node[below]{$z_{0}$};
\draw[fill] (-7,0)circle [radius=0.08];
\draw[fill] (-3,0)circle [radius=0.08];
\draw[fill] (-7.5,1.5)node[above]{$|e^{-2it\theta(z)}|\rightarrow0$};
\draw[fill] (-5,0)node[above]{$|e^{-2it\theta(z)}|\rightarrow\infty$};
\draw[fill] (-5,-0.5)node[below]{$|e^{-2it\theta(z)}|\rightarrow0$};
\draw[fill] (-7.5,-1.5)node[below]{$|e^{-2it\theta(z)}|\rightarrow\infty$};
\draw[fill] (-5,3)node[above]{$t<0$};
\path [fill=pink] (1,0) -- (9,0) to (9,3) -- (1,3);
\filldraw[white, line width=0.5](7,0) arc (0:180:2);
\filldraw[pink, line width=0.5](3,0) arc (-180:0:2);
\draw[->][thick](1,0)--(9,0);
\draw[fill] (9,0)node[right]{$Rez$};
\draw[fill] (5,0)node[below]{0} circle [radius=0.08];
\draw[fill] (7.4,0)node[below]{$z_{0}$} ;
\draw[fill] (2.6,0)node[below]{$-z_{0}$} ;
\draw[fill] (7,0)circle [radius=0.08];
\draw[fill] (3,0)circle [radius=0.08];
\draw[fill] (7.5,1.5)node[above]{$|e^{-2it\theta(z)}|\rightarrow\infty$};
\draw[fill] (5,0)node[above]{$|e^{-2it\theta(z)}|\rightarrow0$};
\draw[fill] (5,-0.5)node[below]{$|e^{-2it\theta(z)}|\rightarrow\infty$};
\draw[fill] (7.5,-1.5)node[below]{$|e^{-2it\theta(z)}|\rightarrow0$};
\draw[fill] (5,3)node[above]{$t>0$};
\draw(-5,0) [black, line width=1] circle(2);
\draw(5,0) [black, line width=1] circle(2);
\end{tikzpicture}
\caption{Exponential decaying domains.}\label{fig1}
\end{figure}
\end{center}

We introduce a notation
\begin{align}
\upsilon(s)=-\frac{1}{2\pi}\log(1-|r(s)|^2),
\end{align}
and a function
\begin{align}
\delta(z)=\exp\bigg(i\int_{-z_0}^{z_0}\frac{\upsilon(s)}{s-z}ds\bigg),
\end{align}
which satisfies the following properties:
\begin{itemize}
  \item $\delta(z)$ is analytic in $\mathbb{C}\backslash(-z_0,z_0)$;
  \item The boundary values $\delta_{\pm}(z)$ satisfy that
  \begin{align}
  \delta_+(z)=(1-|r|^2)\delta_-(z),~~as~z\in(-z_0,z_0);
  \end{align}
  \item
\begin{align}
\delta(z)=1-\frac{i}{z}\int_{-z_0}^{z_{0}}\upsilon(s)ds+O(z^{-2}),
~~as~~|z|\rightarrow\infty,~~|\arg(z)|\leq c<\pi.
\end{align}
  \item  As $z\rightarrow \pm z_{0}$ along ray $z=\pm z_{0}+e^{i\psi}l$ with $|\psi|\leq c<\pi$, $l>0$, we have
\begin{align}
|\delta(z)-\delta_0(\pm z_0)(z\mp z_0)^{\pm i\upsilon(\pm z_0)}|\leq C\parallel r\parallel_{H^{1}(\mathbb{R})}|z\mp z_{0}|^{\frac{1}{2}},
\end{align}
where
\begin{align}
&\delta_0(\pm z_0)=exp\{i\beta(\pm z_0,\pm z_0)\},\\
&\beta(z,\pm z_0)=\mp\upsilon(\pm z_0)\log\big(z\pm(z_0-1)\big)+\int_{-z_0}^{z_{0}}\frac{\upsilon(s)-\chi_\pm(s)\upsilon(\pm z_{0})}{s-z}ds,
\end{align}
\begin{align}
\chi_+(z)=\left\{\begin{aligned}
&1, \quad z\in [z_0-1,z_0],\\
&0, \quad other,
\end{aligned}\right.~~
\chi_-(z)=\left\{\begin{aligned}
&1, \quad z\in [-z_0,-z_0+1],\\
&0, \quad other.
\end{aligned}\right.
\end{align}
\end{itemize}

\subsection{The first deformation of the basic RH problem}

We introduce a new matrix-valued function $M^{(1)}(z)=M(z)\delta(z)^{-\sigma_3}$ which satisfies the following matrix RH problem.

\begin{RHP}
Find a matrix value function $M^{(1)}(z)$ admiting:
\begin{itemize}
  \item Analyticity: $M^{(1)}$ is analytic on $\mathbb{C}\backslash\mathbb{R}$;
  \item Jump condition: The boundary values $M^{(1)}_{\pm}(z)$ satisfy the jump condition $M^{(1)}_{+}(z)=M^{(1)}_{-}(z)J^{(1)}(z)$ as $z\in\mathbb{R}$ where
  \begin{align}
      J^{(1)}=\left\{\begin{aligned}
&\left(
   \begin{array}{cc}
     1 & -\bar{r}\delta^2e^{-2it\theta} \\
     0 & 1 \\
   \end{array}
 \right)\left(
          \begin{array}{cc}
            1 & 0 \\
            r\delta^{-2}e^{2it\theta} & 1 \\
          \end{array}
        \right), \quad |z|>z_0,\\
&\left(
   \begin{array}{cc}
     1 & 0 \\
     \frac{r}{1-|r|^2}\delta_-^{-2}e^{2it\theta} & 1 \\
   \end{array}
 \right)\left(
          \begin{array}{cc}
            1 & -\frac{\bar{r}}{1-|r|^2}\delta_+^{2}e^{-2it\theta} \\
            0 & 1 \\
          \end{array}
        \right), \quad |z|<z_0,
\end{aligned}\right.
\end{align}
  \item asymptotic property: $M^{(1)}(z)\rightarrow \mathbb{I}$ as $z\rightarrow\infty$;
\end{itemize}
\end{RHP}

Then, we consider an analytic continuation of the jump matrix $J^{(1)}(z)$ away from the real axis. In order to achieve this objective, we first introduce the following new contours
\begin{align}
\begin{split}
\Sigma_{m}=&z_0+e^{\frac{(2m-1)}{4}\pi i}\mathbb{R}_{+},~~m=1,4,\\
\Sigma_{m}=&z_0+e^{\frac{(2m-1)}{4}\pi i}h,~~h\in(0,\frac{1}{\sqrt{2}}z_0),~~m=2,3,\\
\Sigma_{m}=&-z_0+e^{\frac{(2m-1)}{4}\pi i}h,~~h\in(0,\frac{1}{\sqrt{2}}z_0),~~m=5,8,\\
\Sigma_{m}=&-z_0+e^{\frac{(2m-1)}{4}\pi i}\mathbb{R}_{+},~~m=6,7,\\
\Sigma_{m}=&e^{\frac{(2m-1)}{4}\pi i}h,~~h\in(0,\frac{1}{\sqrt{2}}z_0),~~m=9,10,11,12,\\
\Sigma^{(2)}=&\Sigma_{1}\cup\Sigma_{2}\cup\ldots\cup\Sigma_{12},~~~~
\Sigma_{+}^{(2)}=\Sigma_{1}\cup\Sigma_{2}\cup\Sigma_{3}\cup\Sigma_{4},\\
\Sigma_{-}^{(2)}=&\Sigma_{5}\cup\Sigma_{6}\cup\Sigma_{7}\cup\Sigma_{8},~~~~
\Sigma_{0}^{(2)}=\Sigma_{9}\cup\Sigma_{10}\cup\Sigma_{11}\cup\Sigma_{12}.
\end{split}
\end{align}

The real axis $\mathbb{R}$ and $\Sigma^{(2)}$ divide the complex plane $\mathbb{C}$ into ten parts $\Omega_m$, $m=1,2,\ldots,10.$ which can be found in Fig~\ref{fig2}.

\begin{center}
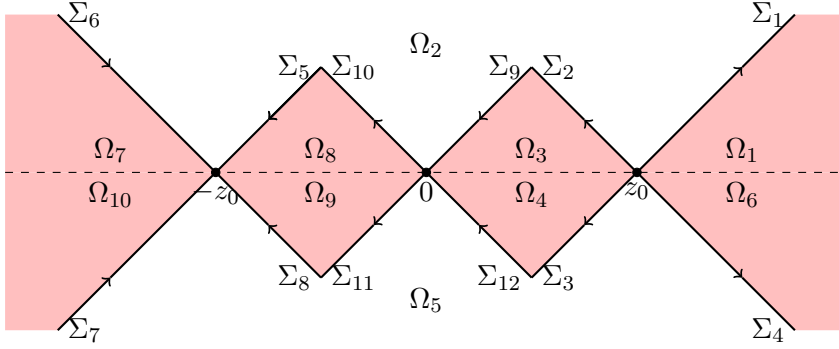
\begin{figure}
\begin{tikzpicture}[scale=0.7]
\path [fill=pink] (-4,0) -- (-2,2) to (0,0) -- (-2,-2);
\path [fill=pink] (4,0) -- (2,2) to (0,0) -- (2,-2);
\path [fill=pink] (4,0) -- (7,3) to (8,3) -- (8,0);
\path [fill=pink] (4,0) -- (7,-3) to (8,-3) -- (8,0);
\path [fill=pink] (-4,0) -- (-7,3) to (-8,3) -- (-8,0);
\path [fill=pink] (-4,0) -- (-7,-3) to (-8,-3) -- (-8,0);
\draw [dashed](-8,0)--(8,0);
\draw[->][thick](4,0)--(6,2);
\draw[-][thick](6,2)--(7,3);
\draw[->][thick](4,0)--(6,-2);
\draw[-][thick](6,-2)--(7,-3);
\draw[->][thick](4,0)--(3,1);
\draw[-][thick](3,1)--(2,2);
\draw[->][thick](4,0)--(3,-1);
\draw[-][thick](3,-1)--(2,-2);
\draw[->][thick](2,2)--(1,1);
\draw[->][thick](1,1)--(-1,-1);
\draw[->][thick](2,-2)--(1,-1);
\draw[->][thick](1,-1)--(-1,1);
\draw[-][thick](-1,1)--(-2,2);
\draw[->][thick](-2,2)--(-3,1);
\draw[-][thick](-3,1)--(-6,-2);
\draw[->][thick](-2,2)--(-3,1);
\draw[->][thick](-7,3)--(-6,2);
\draw[-][thick](-1,-1)--(-2,-2);
\draw[->][thick](-2,-2)--(-3,-1);
\draw[-][thick](-3,-1)--(-6,2);
\draw[->][thick](-7,-3)--(-6,-2);
\draw[fill] (0,0)node[below]{$0$} circle [radius=0.08];
\draw[fill] (4,0)node[below]{$z_{0}$} circle [radius=0.08];
\draw[fill] (-4,0)node[below]{$-z_{0}$} circle [radius=0.08];
\draw[fill] (2,0)node[below]{$\Omega_{4}$};
\draw[fill] (2,0)node[above]{$\Omega_{3}$};
\draw[fill] (-2,0)node[below]{$\Omega_{9}$};
\draw[fill] (-2,0)node[above]{$\Omega_{8}$};
\draw[fill] (6,0)node[below]{$\Omega_{6}$};
\draw[fill] (6,0)node[above]{$\Omega_{1}$};
\draw[fill] (-6,0)node[below]{$\Omega_{10}$};
\draw[fill] (-6,0)node[above]{$\Omega_{7}$};
\draw[fill] (0,2)node[above]{$\Omega_{2}$};
\draw[fill] (0,-2)node[below]{$\Omega_{5}$};
\draw[fill] (7,3)node[left]{$\Sigma_{1}$};
\draw[fill] (7,-3)node[left]{$\Sigma_{4}$};
\draw[fill] (-7,3)node[right]{$\Sigma_{6}$};
\draw[fill] (-7,-3)node[right]{$\Sigma_{7}$};
\draw[fill] (-2,2)node[left]{$\Sigma_{5}$};
\draw[fill] (-2,2)node[right]{$\Sigma_{10}$};
\draw[fill] (-2,-2)node[left]{$\Sigma_{8}$};
\draw[fill] (-2,-2)node[right]{$\Sigma_{11}$};
\draw[fill] (2,2)node[left]{$\Sigma_{9}$};
\draw[fill] (2,2)node[right]{$\Sigma_{2}$};
\draw[fill] (2,-2)node[left]{$\Sigma_{12}$};
\draw[fill] (2,-2)node[right]{$\Sigma_{3}$};
\end{tikzpicture}
\caption{The new contours and parts.}\label{fig2}
\end{figure}
\end{center}

\begin{prop}
There exist functions $R_{j}: {\Omega}_{m} \rightarrow \mathbb{C}, m= 1, 3, 4, 6, 7, 8, 9, 10$  such that
\begin{align*}
&R_{1}(z)=\left\{\begin{aligned}&r(z)\delta^{-2}(z), ~~~~z\in(z_{0}, +\infty),\\
&r(z_{0})\delta_{0}^{-2}(z_{0})(z-z_{0})^{-2i\upsilon(z_{0})}, ~~~~z\in\Sigma_{1},
\end{aligned}\right.\\
&R_{3}(z)=\left\{\begin{aligned}&\frac{\bar{r}(z)}{1-|r(z)|^{2}}\delta_{+}^{2}(z), ~~~~z\in(0, z_{0}),\\
&\frac{\bar{r}(z_{0})}{1-|r(z_{0})|^{2}}\delta_{0}^{2}(z_{0}) (z-z_{0})^{2i\upsilon(z_{0})}, ~~~~z\in\Sigma_{2},
\end{aligned}\right.\\
&R_{4}(z)=\left\{\begin{aligned}&\frac{r(z)}{1-|r(z)|^{2}}\delta_{-}^{-2}(z), ~~~~z\in(0, z_{0}),\\
&\frac{r(z_{0})}{1-|r(z_{0})|^{2}}\delta_{0}^{-2}(z_{0}) (z-z_{0})^{-2i\upsilon(z_{0})}, ~~~~z\in\Sigma_{3},
\end{aligned}\right.\\
&R_{6}(z)=\left\{\begin{aligned}&\bar{r}(z)\delta^{2}(z), ~~~~z\in(z_{0}, +\infty),\\
&\bar{r}(z_{0})\delta_{0}^{2}(z_{0})(z-z_{0})^{2i\upsilon(z_{0})}, ~~~~z\in\Sigma_{4},
\end{aligned}\right.\\
&R_{7}(z)=\left\{\begin{aligned}&r(z)\delta^{-2}(z), ~~~~z\in(-\infty, z_{0}),\\
&r(-z_{0})\delta_{0}^{-2}(-z_{0})(z+z_{0})^{-2i\upsilon(-z_{0})}, ~~~~z\in\Sigma_{6},
\end{aligned}\right.\\
&R_{8}(z)=\left\{\begin{aligned}&\frac{\bar{r}(z)}{1-|r(z)|^{2}}\delta_{+}^{2}(z), ~~~~z\in(-z_{0},0),\\
&\frac{\bar{r}(-z_{0})}{1-|r(-z_{0})|^{2}}\delta_{0}^{2}(-z_{0}) (z+z_{0})^{2i\upsilon(-z_{0})}, ~~~~z\in\Sigma_{5},
\end{aligned}\right.\\
&R_{9}(z)=\left\{\begin{aligned}&\frac{r(z)}{1-|r(z)|^{2}}\delta_{-}^{-2}(z), ~~~~z\in(-z_{0},0),\\
&\frac{r(-z_{0})}{1-|r(-z_{0})|^{2}}\delta_{0}^{-2}(-z_{0}) (z+z_{0})^{-2i\upsilon(-z_{0})},~~~~z\in\Sigma_{8},
\end{aligned}\right.\\
&R_{10}(z)=\left\{\begin{aligned}&\bar{r}(z)\delta^{2}(z), ~~~~z\in(-\infty, -z_{0}),\\
&\bar{r}(-z_{0})\delta_{0}^{2}(-z_{0})(z+z_{0})^{2i\upsilon(-z_{0})},~~~~z\in\Sigma_{7},
\end{aligned}\right.
\end{align*}
which satisfy the following estimate
\begin{align*}
&|R_{j}(z)|\leq c_{1}\sin^{2}(\arg (z-z_{0}))+c_{2}\left<Rez\right>^{-1/2},~~~~j=1,3,4,6,\\
&|R_{j}(z)|\leq c_{1}\sin^{2}(\arg (z+z_{0}))+c_{2}\left<Rez\right>^{-1/2},~~~~j=7,8,9,10,\\
&|\bar{\partial}R_{j}(z)|\leq c_{1}\bar{\partial}\chi_{\mathcal{Z}}(z)+c_{2}|p'_{j}(Rez)|+c_{3}|z-z_{0}|^{-1/2},~~~~j=1,3,4,6,\\
&|\bar{\partial}R_{j}(z)|\leq c_{1}\bar{\partial}\chi_{\mathcal{Z}}(z)+c_{2}|p'_{j}(Rez)|+c_{3}|z+z_{0}|^{-1/2},~~~~j=7,8,9,10,\\
&\bar{\partial}R_{j}(z)=0,z\in \Omega_{2}\cup\Omega_{5},
\end{align*}
where
\begin{align*}
&\left<Rez\right>=\sqrt{1+(Rez)^{2}},\\
&p_{1}=p_{7}=r(z),~~p_{6}=p_{10}=\bar{r}(z),\\
&p_{3}=p_{8}=\frac{r(z)}{1-|r(z)|^{2}},~~p_{4}=p_{9}=\frac{\bar{r}}{1-|r(z)|^{2}}.
\end{align*}
\end{prop}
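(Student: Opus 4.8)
The plan is to construct each $R_j$ by an explicit interpolation that equals the prescribed reflection datum $p_j(z)\delta^{\mp2}(z)$ on the relevant segment of the real axis and the frozen model value $p_j(\pm z_0)\delta_0^{\mp2}(\pm z_0)(z\mp z_0)^{\mp2i\upsilon(\pm z_0)}$ on the associated ray $\Sigma_m$, the interpolation being carried out in the angular variable centred at the nearest critical point. Concretely, for $R_1$ on $\Omega_1$ I would set, with $\phi=\arg(z-z_0)$,
\begin{align*}
R_1(z)=r(z_0)\delta_0^{-2}(z_0)(z-z_0)^{-2i\upsilon(z_0)}
+\cos(2\phi)\Big[r(\mathrm{Re}\,z)\delta^{-2}(z)-r(z_0)\delta_0^{-2}(z_0)(z-z_0)^{-2i\upsilon(z_0)}\Big],
\end{align*}
and define the remaining $R_j$ by the same template with $r$ replaced by the corresponding $p_j$, with $\delta$ replaced by its boundary values $\delta_\pm$ on the inner sectors $\Omega_3,\Omega_4,\Omega_8,\Omega_9$, and with $z_0$, $(z-z_0)$ replaced by $-z_0$, $(z+z_0)$ near the left critical point; the left-hand formulas are in any case forced by the symmetry of Proposition~\ref{prop2.3}. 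Since $\cos(2\phi)=1$ on the real axis and $\cos(2\phi)=0$ on the rays (where $\phi=\pm\pi/4$), the boundary values match by inspection. In the two central sectors $\Omega_2\cup\Omega_5$, which do not abut the real axis, I would take $R_j$ to be the analytic model term alone, so that $\bar{\partial}R_j=0$ there.

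The two scalar facts on which everything rests are the low-regularity properties of $r$ inherited from $r\in H^{1,1}(\mathbb{R})$ (the Lemma of Section~\ref{s:2}): by Sobolev embedding and Cauchy--Schwarz one has the H\"older bound $|r(z)-r(w)|\le\|r\|_{H^1}|z-w|^{1/2}$ together with the decay $|r(\mathrm{Re}\,z)|\lesssim\langle\mathrm{Re}\,z\rangle^{-1/2}$, and the same for $\bar r$ and for $r/(1-|r|^2)$ once $\sup_z|r(z)|<1$ (which follows from $\det S=1$ and the symmetry, giving $|s_{11}|^2(1-|r|^2)=1$). Combined with the fourth listed property of $\delta$, namely $|\delta(z)-\delta_0(\pm z_0)(z\mp z_0)^{\pm i\upsilon(\pm z_0)}|\le C\|r\|_{H^1}|z\mp z_0|^{1/2}$, this yields the crucial cancellation
\[
\big|r(\mathrm{Re}\,z)\delta^{-2}(z)-r(z_0)\delta_0^{-2}(z_0)(z-z_0)^{-2i\upsilon(z_0)}\big|\lesssim|z-z_0|^{1/2},
\]
obtained by inserting and removing $r(z_0)\delta^{-2}(z)$. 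For the size estimate I would then rewrite $R_1=r(\mathrm{Re}\,z)\delta^{-2}(z)\cos(2\phi)+(\mathrm{model})(1-\cos2\phi)$ and use $1-\cos2\phi=2\sin^2\phi=2\sin^2(\arg(z-z_0))$; since the model factor is bounded in the sector (one has $|(z-z_0)^{-2i\upsilon(z_0)}|=e^{2\upsilon(z_0)\arg(z-z_0)}$, bounded for $|\phi|\le c<\pi$) and $|r(\mathrm{Re}\,z)\delta^{-2}(z)|\lesssim\langle\mathrm{Re}\,z\rangle^{-1/2}$, this produces exactly $c_1\sin^2(\arg(z\mp z_0))+c_2\langle\mathrm{Re}\,z\rangle^{-1/2}$.

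For the $\bar{\partial}$-estimate I would observe that $\delta^{\mp2}$ and $(z\mp z_0)^{\mp2i\upsilon}$ are holomorphic in each sector, so $\bar{\partial}$ acts only on $p_j(\mathrm{Re}\,z)$ and on $\cos(2\phi)$. The first produces $\tfrac12 p_j'(\mathrm{Re}\,z)\delta^{\mp2}\cos(2\phi)$, bounded by $c_2|p_j'(\mathrm{Re}\,z)|$; using $\bar{\partial}\cos(2\phi)=-\tfrac{i e^{i\phi}}{\rho}\sin(2\phi)$ with $\rho=|z-z_0|$, the second is bounded by $\rho^{-1}$ times the bracket, and here the $\rho^{1/2}$ cancellation above is exactly what converts the dangerous $\rho^{-1}$ into the integrable $c_3|z\mp z_0|^{-1/2}$. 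The remaining term $c_1\bar{\partial}\chi_{\mathcal{Z}}$ arises because, to keep the extensions supported away from $z=0$ and to splice the finite rays $\Sigma_2,\Sigma_3,\Sigma_5,\Sigma_8,\Sigma_9,\ldots$ across the central region, I would multiply the raw interpolant by a smooth cutoff $\chi_{\mathcal{Z}}$, and $\bar{\partial}$ hitting this cutoff contributes the first term, harmless because $\bar{\partial}\chi_{\mathcal{Z}}$ is supported on a fixed compact set bounded away from $\pm z_0$. I expect the main obstacle to be precisely this $\bar{\partial}$-control near $\pm z_0$: because $r$ is only $H^{1,1}$, no better than H\"older-$\tfrac12$ regularity and decay are available, so the bound $|z\mp z_0|^{-1/2}$ is sharp and one must verify that the interplay of the angular factor $1/\rho$ with the $\rho^{1/2}$ modulus of continuity leaves an error still integrable against the oscillatory weight in the subsequent pure $\bar{\partial}$-problem; the bookkeeping of $\delta_\pm$ on the inner sectors and the symmetry reduction to the $-z_0$ side are routine by comparison.
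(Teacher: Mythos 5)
The paper contains no proof of this proposition at all: it is stated bare, imported from the $\bar{\partial}$-steepest-descent literature (Dieng--McLaughlin, Borghese--Jenkins--McLaughlin, and the template works cited as [29]--[37]), so there is no in-paper argument to compare against, and your proposal is in effect supplying the missing proof. It does so correctly, along exactly the standard lines: the $\cos(2\phi)$ angular interpolation between the real-axis datum $p_j(\mathrm{Re}\,z)\,\delta^{\mp2}(z)$ and the frozen factor $p_j(\pm z_0)\delta_0^{\mp2}(\pm z_0)(z\mp z_0)^{\mp2i\upsilon(\pm z_0)}$, the H\"older bound $|r(x)-r(y)|\le\|r'\|_{L^2}|x-y|^{1/2}$, the decay $|r(x)|\lesssim\langle x\rangle^{-1/2}$ coming from the $L^{2,1}$ weight, the uniform bound $\sup|r|<1$ via $\det S=1$ and the $\sigma_1$-symmetry (giving $|s_{11}|^2(1-|r|^2)=1$, so the denominators $1-|r|^2$ in $p_3,p_4,p_8,p_9$ are harmless), and the conversion of the $1/\rho$ produced by $\bar{\partial}\cos(2\phi)=-\frac{ie^{i\phi}}{\rho}\sin(2\phi)$ into the integrable $|z\mp z_0|^{-1/2}$ via the $\rho^{1/2}$ cancellation against the paper's fourth listed property of $\delta$ are precisely the ingredients this construction rests on in the cited sources. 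Two minor remarks. First, passing from that property of $\delta$ to a bound on $|\delta^{-2}(z)-\delta_0^{-2}(z_0)(z-z_0)^{-2i\upsilon(z_0)}|$ additionally uses that $\delta$ and the model factor are bounded and bounded away from zero on the sector (true, since $\upsilon$ is real and $\arg(z-z_0)$ is confined there); you use this implicitly and it deserves a line. Second, the term $c_1\bar{\partial}\chi_{\mathcal{Z}}$ in the statement is vestigial here: in the template papers $\chi_{\mathcal{Z}}$ is a cutoff near the discrete spectrum $\mathcal{Z}$, which is absent in this solitonless setting, so your reinterpretation of it as a splicing cutoff near $z=0$ differs from its origin but is harmless, since multiplying the interpolant by any fixed smooth cutoff merely contributes that extra term to the $\bar{\partial}$ bound.
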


\subsection{The construction of the mixed $\bar{\partial}$-RH problem}

We introduce a transform
\begin{align}
M^{(2)}(z)=M^{(1)}(z)R^{(2)}(z)
\end{align}
to extend the jump matrix to a new contour of oscillation term, where
\begin{align}
R^{(2)}=\left\{\begin{aligned}
&\left(
  \begin{array}{cc}
    1 & 0  \\
    -R_{1}e^{2it\theta} & 1 \\
  \end{array}
\right), ~~&z\in\Omega_{1},\\
&\left(
  \begin{array}{cc}
    1 & R_{3}e^{-2it\theta} \\
    0 & 1 \\
  \end{array}
\right), ~~&z\in\Omega_{3},\\
&\left(
  \begin{array}{cc}
    1 & 0 \\
    R_{4}e^{2it\theta} & 1 \\
  \end{array}
\right),~~ &z\in\Omega_{4},\\
&\left(
  \begin{array}{cc}
    1 & -R_{6}e^{-2it\theta} \\
    0 & 1 \\
  \end{array}
\right), ~~&z\in\Omega_{6},\\
&\left(
  \begin{array}{cc}
    1 & 0 \\
    -R_{7}e^{2it\theta} & 1 \\
  \end{array}
\right), ~~&z\in\Omega_{7},\\
&\left(
  \begin{array}{cc}
    1 & R_{8}e^{-2it\theta} \\
    0 & 1 \\
  \end{array}
\right), ~~&z\in\Omega_{8},\\
&\left(
  \begin{array}{cc}
    1 & 0 \\
    R_{9}e^{2it\theta} & 1 \\
  \end{array}
\right), ~~&z\in\Omega_{9},\\
&\left(
  \begin{array}{cc}
    1 & -R_{10}e^{-2it\theta} \\
    0 & 1 \\
  \end{array}
\right), ~~&z\in\Omega_{10},\\
&\left(
  \begin{array}{cc}
    1 & 0 \\
    0 & 1 \\
  \end{array}
\right),~~ &z\in\Omega_{2}\cup\Omega_{5}.
\end{aligned}
\right.
\end{align}

Based on the above construction, we can infer that $M^{(2)}$ have no jump on the real axis. Additionally, the influence of $\bar{\partial}$-contribution on the long-time asymptotic behaviors of the solutions can be ignored by controlling the norm of $R^{(2)}$.

Furthermore, we get the mixed $\bar{\partial}$-RH problem for $M^{(2)}(z)$ which satisfies
\begin{RHP}\label{RHP3}
Find a matrix value function $M^{(2)}$ admiting
\begin{itemize}
\item Analyticity:~$M^{(2)}(z)$ is continuous in $\mathbb{C}\setminus\Sigma^{(2)}$.
  \item Jump condition:
  \begin{align}
  M_{+}^{(2)}(x,t,z)=M_{-}^{(2)}(x,t,z)J^{(2)}(x,t,z),~~~z\in\Sigma^{(2)},
  \end{align}
  where
  \begin{align}\label{J2}
  J^{(2)}(x,t,z)=\left\{\begin{aligned}
&\left(
  \begin{array}{cc}
    1 & 0  \\
     R_{1}e^{2it\theta} & 1 \\
  \end{array}
\right), ~~&z\in\Sigma_{1},\\
&\left(
  \begin{array}{cc}
    1 &  -R_{3}e^{-2it\theta} \\
    0 & 1 \\
  \end{array}
\right), ~~&z\in\Sigma_{2}\cup\Sigma_{9},\\
&\left(
  \begin{array}{cc}
    1 & 0  \\
    -R_{4}e^{2it\theta} & 1 \\
  \end{array}
\right), ~~&z\in\Sigma_{3}\cup\Sigma_{12},\\
&\left(
  \begin{array}{cc}
    1 &  R_{6}e^{-2it\theta} \\
    0 & 1 \\
  \end{array}
\right), ~~&z\in\Sigma_{4},\\
&\left(
  \begin{array}{cc}
    1 &  -R_{8}e^{-2it\theta} \\
    0 & 1 \\
  \end{array}
\right), ~~&z\in\Sigma_{5}\cup\Sigma_{10},\\
&\left(
  \begin{array}{cc}
    1 & 0  \\
     R_{7}e^{2it\theta} & 1 \\
  \end{array}
\right), ~~&z\in\Sigma_{6},\\
&\left(
  \begin{array}{cc}
    1 &  R_{10}e^{-2it\theta} \\
    0 & 1 \\
  \end{array}
\right), ~~&z\in\Sigma_{7},\\
&\left(
  \begin{array}{cc}
    1 & 0  \\
     -R_{9}e^{2it\theta} & 1 \\
  \end{array}
\right), ~~&z\in\Sigma_{8}\cup\Sigma_{11}.\\
\end{aligned}
\right.
\end{align}
  \item Asymptotic behavior:
  \begin{align}
  M^{(2)}(x,t,z)\rightarrow\mathbb{I}~~as~~z\rightarrow\infty.
  \end{align}
  \item $\bar{\partial}M^{(2)}=M^{(2)}\bar{\partial}R^{(2)}(z),$ as $z\in\mathbb{C}\setminus\Sigma^{(2)}$ where
   \begin{align}
\bar{\partial}R^{(2)}=\left\{\begin{aligned}
&\left(
  \begin{array}{cc}
    0 & 0  \\
    -\bar{\partial}R_{1}e^{2it\theta} & 0 \\
  \end{array}
\right), ~~&z\in\Omega_{1},\\
&\left(
  \begin{array}{cc}
    0 & \bar{\partial}R_{3}e^{-2it\theta} \\
    0 & 0 \\
  \end{array}
\right), ~~&z\in\Omega_{3},\\
&\left(
  \begin{array}{cc}
    0 & 0 \\
    \bar{\partial}R_{4}e^{2it\theta} & 0 \\
  \end{array}
\right),~~ &z\in\Omega_{4},\\
&\left(
  \begin{array}{cc}
    0 & -\bar{\partial}R_{6}e^{-2it\theta} \\
    0 & 0 \\
  \end{array}
\right), ~~&z\in\Omega_{6},\\
&\left(
  \begin{array}{cc}
    0 & 0 \\
    -\bar{\partial}R_{7}e^{2it\theta} & 0 \\
  \end{array}
\right), ~~&z\in\Omega_{7},\\
&\left(
  \begin{array}{cc}
    0 & \bar{\partial}R_{8}e^{-2it\theta} \\
    0 & 0 \\
  \end{array}
\right), ~~&z\in\Omega_{8},\\
&\left(
  \begin{array}{cc}
    0 & 0 \\
    \bar{\partial}R_{9}e^{2it\theta} & 0 \\
  \end{array}
\right), ~~&z\in\Omega_{9},\\
&\left(
  \begin{array}{cc}
    0 & -\bar{\partial}R_{10}e^{-2it\theta} \\
    0 & 0 \\
  \end{array}
\right), ~~&z\in\Omega_{10},\\
&\left(
  \begin{array}{cc}
    0 & 0 \\
    0 & 0 \\
  \end{array}
\right),~~ &z\in\Omega_{2}\cup\Omega_{5}.
\end{aligned}
\right.
\end{align}
\end{itemize}
\end{RHP}

\section{The decomposition of the mixed $\bar{\partial}$-RH problem}\label{s:5}

In this section, we split the mixed $\bar{\partial}$-RH problem into  a model RH problem with $\bar{\partial}R^{(2)}=0$ and a pure $\bar{\partial}$-RH problem with $\bar{\partial}R^{(2)}\neq0$ to solve the RH problem~\ref{RHP3}.

\subsection{The model RH problem}
In the scheme of $\bar{\partial}R^{(2)}=0$, we construct a model RH problem.
\begin{RHP}\label{RHP4}
Find a matrix value function $M^{(2)}_{RHP}$ which admits
\begin{itemize}
\item Analyticity:~$M^{(2)}_{RHP}(z)$ is analytic in $\mathbb{C}\setminus\Sigma^{(2)}$.
  \item Jump condition:
  \begin{align}
  M_{RHP+}^{(2)}(x,t,z)=M_{RHP-}^{(2)}(x,t,z)J^{(2)}(x,t,z),~~~~z\in\Sigma^{(2)},
  \end{align}
  where $J^{(2)}(x,t,z)$ are shown in \eqref{J2}.
  \item Asymptotic behavior:
  \begin{align}
  M^{(2)}_{RHP}(x,t,z)\rightarrow\mathbb{I}~~as~~z\rightarrow\infty.
  \end{align}
\end{itemize}
\end{RHP}

Then, we aim to construct the solution $M^{(2)}_{RHP}$ for the RH problem~\ref{RHP4}.

\begin{prop}\label{prop4.1}
The jump matrix $J^{(2)}$ which defined in\eqref{J2} satisfies the following estimates
\begin{align}\label{J2}
||J^{(2)}-\mathbb{I}||_{L^{\infty}(\Sigma_{\pm}^{(2)})}=\left\{\begin{aligned}
&||J^{(2)}-\mathbb{I}||_{L^{\infty}(\Sigma_{\pm}^{(2)}\setminus\mathcal{U}_{\pm z_0})}
=O\left(e^{-\frac{\sqrt{2}|\alpha|t}{4}|z\mp z_0|(z_{0}^{-2}-|z|^{-2})}\right),\\
&||J^{(2)}-\mathbb{I}||_{L^{\infty}(\Sigma_{0}^{(2)})}
=O\left(e^{-\frac{|\alpha|t}{4z_{0}}}\right),
\end{aligned}\right.
\end{align}
where $\mathcal{U}_{\pm z_0}=\left\{z:|z\mp z_0|\leqslant \frac{z_{0}}{2}\right\}$.
\end{prop}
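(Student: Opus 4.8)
The plan is to observe that on every component of $\Sigma^{(2)}$ the matrix $J^{(2)}-\mathbb{I}$ has a single nonzero (off-diagonal) entry of the form $\pm R_j(z)e^{\pm 2it\theta(z)}$, so that
\begin{align*}
\|J^{(2)}-\mathbb{I}\|_{L^\infty(\Sigma_m)}=\|R_j e^{\pm 2it\theta}\|_{L^\infty(\Sigma_m)}.
\end{align*}
By the estimates of the previous proposition, $|R_j(z)|\le c_1\sin^2(\arg(z\mp z_0))+c_2\langle \mathrm{Re}\,z\rangle^{-1/2}\le C$ uniformly on each ray, so the entire decay is carried by the oscillatory factor $|e^{\pm2it\theta}|=e^{\mp 2t\,\mathrm{Im}\,\theta}$. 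Thus the proposition reduces to determining the sign and magnitude of $\mathrm{Im}\,\theta$ on each piece of the contour.

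Next, using $\theta(z)=-\frac{\alpha}{4}\big(\frac{z}{z_0^2}+\frac1z\big)$ and $\alpha<0$, a direct computation gives
\begin{align*}
\mathrm{Im}\,\theta(z)=\frac{|\alpha|}{4}\,\mathrm{Im}(z)\Big(\frac{1}{z_0^2}-\frac{1}{|z|^2}\Big).
\end{align*}
I would then proceed ray by ray, parametrizing $z=\pm z_0+u e^{i\phi}$ with $\phi=\frac{(2m-1)\pi}{4}$. On each $\Sigma_m$ I verify that the sign of $\mathrm{Im}(z)$ and the sign of $z_0^{-2}-|z|^{-2}$ combine with the sign in the exponent $\pm 2it\theta$ so that the relevant factor decays; e.g. on $\Sigma_1$ one has $\mathrm{Im}(z)=u/\sqrt2>0$ and $|z|>z_0$, hence $\mathrm{Im}\,\theta>0$, matching the factor $e^{2it\theta}$. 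Substituting $\mathrm{Im}(z)=u\sin\phi=|z\mp z_0|/\sqrt2$ for the $\pi/4$-type rays turns the exponent into $\frac{\sqrt2|\alpha|t}{4}|z\mp z_0|\big(z_0^{-2}-|z|^{-2}\big)$, which is the first claimed bound; restricting to $\Sigma_\pm^{(2)}\setminus\mathcal{U}_{\pm z_0}$ keeps $|z\mp z_0|\ge z_0/2$ bounded away from $0$.

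For $\Sigma_0^{(2)}=\Sigma_9\cup\cdots\cup\Sigma_{12}$ the rays emanate from the origin, $z=h e^{i\phi}$ with $h\in(0,\tfrac{1}{\sqrt2}z_0)$ and $|\sin\phi|=1/\sqrt2$, so $|z|=h<z_0$ and the sign of $\mathrm{Im}\,\theta$ again produces decay. Here the exponent equals
\begin{align*}
2t|\mathrm{Im}\,\theta|=\frac{|\alpha|t}{2\sqrt2}\cdot\frac{z_0^2-h^2}{h\,z_0^2}.
\end{align*}
Since $h\mapsto\frac{z_0^2-h^2}{h z_0^2}$ is decreasing on $(0,\tfrac{1}{\sqrt2}z_0]$, its minimum is attained at the endpoint $h=\tfrac{1}{\sqrt2}z_0$, where it equals $\frac{1}{\sqrt2\,z_0}$; this yields the uniform bound $e^{-|\alpha|t/(4z_0)}$, the second estimate.

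The computation is elementary, so the main obstacle is organizational: carrying out the sign bookkeeping consistently across all twelve rays (in particular checking, for each ray, that $|z|$ crosses $z_0$ in the direction that makes $\mathrm{Im}\,\theta$ have the sign needed for decay of the corresponding entry), and obtaining the sharp constant $\frac{|\alpha|}{4z_0}$ on $\Sigma_0^{(2)}$, which depends on evaluating the monotone exponent precisely at the truncation point $h=\tfrac{1}{\sqrt2}z_0$ built into the definition of the contours.
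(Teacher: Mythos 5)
Your proposal is correct, and since the paper states Proposition 4.1 with no proof at all, what you have written is effectively the missing argument rather than an alternative to one: the reduction of $\|J^{(2)}-\mathbb{I}\|_{L^\infty}$ to the single off-diagonal entry $R_je^{\pm 2it\theta}$ with $|R_j|$ bounded by the preceding proposition, the identity $\mathrm{Im}\,\theta(z)=\frac{|\alpha|}{4}\,\mathrm{Im}(z)\left(z_0^{-2}-|z|^{-2}\right)$ for $\alpha<0$, the substitution $\mathrm{Im}(z)=|z\mp z_0|/\sqrt{2}$ on the diagonal rays, and the endpoint evaluation of the decreasing function $h\mapsto\frac{z_0^2-h^2}{hz_0^2}$ at $h=z_0/\sqrt{2}$ (yielding exactly $\frac{|\alpha|t}{4z_0}$ on $\Sigma_0^{(2)}$) are the standard computations this statement presupposes, and all of your constants check out. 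One point you should promote from ``sign bookkeeping'' to an explicit remark: on the four short rays $\Sigma_2,\Sigma_3,\Sigma_5,\Sigma_8$ one has $|z|<z_0$ (e.g.\ on $\Sigma_2$, $|z|^2=z_0^2-\sqrt{2}z_0h+h^2<z_0^2$), so the factor $z_0^{-2}-|z|^{-2}$ is \emph{negative} there and the exponent displayed in the proposition, read literally, would be growing; the correct uniform bound on $\Sigma_{\pm}^{(2)}\setminus\mathcal{U}_{\pm z_0}$ carries $\left||z|^{-2}-z_0^{-2}\right|$, and your verification that on precisely those rays the jump contains $e^{-2it\theta}$ where $\mathrm{Im}\,\theta<0$ (and $e^{2it\theta}$ where $\mathrm{Im}\,\theta>0$, as on $\Sigma_{12}$) is what produces decay. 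In other words, the discrepancy is a sign typo in the paper's statement, not a gap in your argument, but a complete write-up should either tabulate the sign of $\mathrm{Im}\,\theta$ against the exponential appearing in \eqref{J2} on all twelve rays or restate the estimate with the absolute value, since your text as written asserts the printed formula ``is the first claimed bound'' in a case where it only holds on the four unbounded rays $\Sigma_1,\Sigma_4,\Sigma_6,\Sigma_7$.
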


We note that $J^{(2)}\rightarrow\mathbb{I}$ provided that $z\rightarrow 0$, it is not necessary to study only the neighborhood of $z=0$. In term of the proposition~\ref{prop4.1}, it turns out that once we ignore the jump condition of $M^{(2)}_{RHP}(z)$, there is only a small exponential error with respect to $t$ outside of $\mathcal{U}_{+z_0}\cup\mathcal{U}_{-z_0}$. Thus, we only need to consider the appropriate  parabolic cylinder models at $\mathcal{U}_{\pm z_0}$ to derive the solution of $M^{(2)}_{RHP}(z)$.

For $z\rightarrow z_0$, we consider a scaling transformation
\begin{align}
\tilde{M}_{z_0}(k)=M^{(2)}_{RHP}(z),~~~~z=\sqrt{\frac{z_0}{-\alpha t}}k+z_0,
\end{align}
thus, we derive the following RH problem.
\begin{RHP}
Find a matrix value function $\tilde{M}_{z_0}$ which admits
\begin{itemize}
\item Analyticity:~$\tilde{M}_{z_0}$ is analytic in $\mathbb{C}\setminus\tilde{\Sigma}^{(2)}_+$, where $\tilde{\Sigma}^{(2)}_+=\Sigma^{(2)}_+-z_0$.
  \item Jump condition:
  \begin{align}
  \tilde{M}_{z_0+}(k)=\tilde{M}_{z_0-}(k)\tilde{J}_1^{(2)}(k),~~~~k\in\tilde{\Sigma}^{(2)}_+,
  \end{align}
  where $\tilde{J}_1^{(2)}(k)=J^{(2)}\big(\sqrt{\frac{z_0}{-\alpha t}}k+z_0\big)$ and can be expressed as
  \begin{align}
  \tilde{J}_1^{(2)}(k)=e^{\big(-\frac{1}{4}ik^2+\frac{1}{4}i\sqrt{\frac{z_0}{-\alpha t}}k^2-\frac{1}{4}i\frac{1-z_0^2}{z_0}k\big)\hat{\sigma}_3}k^{i\upsilon(z_0)\hat{\sigma}_3}\tilde{V}^{(2)}_1,
  \end{align}
  and
  \begin{align}
  \tilde{V}^{(2)}_1=\left\{\begin{aligned}
&\left(
  \begin{array}{cc}
    1 & 0  \\
    r(z_0)\delta_0^{-2}(z_0)\big(-\frac{\alpha t}{z_0}\big)^{i\upsilon(z_0)}e^{-\frac{i\alpha t}{z_0}} & 1 \\
  \end{array}
\right), ~~&z\in\tilde{\Sigma}_{1},\\
&\left(
  \begin{array}{cc}
    1 & -\frac{\bar{r}(z_0)}{1-|r(z_0)|^2}\delta_0^{2}(z_0)\big(-\frac{\alpha t}{z_0}\big)^{-i\upsilon(z_0)}e^{\frac{i\alpha t}{z_0}} \\
    0 & 1 \\
  \end{array}
\right), ~~&z\in\tilde{\Sigma}_{2},\\
&\left(
  \begin{array}{cc}
    1 & 0 \\
    -\frac{r(z_0)}{1-|r(z_0)|^2}\delta_0^{-2}(z_0)\big(-\frac{\alpha t}{z_0}\big)^{i\upsilon(z_0)}e^{-\frac{i\alpha t}{z_0}} & 1 \\
  \end{array}
\right),~~ &z\in\tilde{\Sigma}_{3},\\
&\left(
  \begin{array}{cc}
    1 & \bar{r}(z_0)\delta_0^{2}(z_0)\big(-\frac{\alpha t}{z_0}\big)^{-i\upsilon(z_0)}e^{\frac{i\alpha t}{z_0}} \\
    0 & 1 \\
  \end{array}
\right), ~~&z\in\tilde{\Sigma}_{4},
\end{aligned}
\right.
\end{align}
where $\tilde{\Sigma}_{j}=\Sigma_{j}-z_0, j=1,2,3,4.$
  \item Asymptotic behavior:
  \begin{align}
  \tilde{M}_{z_0}\rightarrow\mathbb{I}~~as~~k\rightarrow\infty.
  \end{align}
\end{itemize}
\end{RHP}

Then, we choose
\begin{align}
N_{z_0}(k)=\tilde{M}_{z_0}(k)e^{\big(-\frac{1}{4}ik^2+\frac{1}{4}i\sqrt{\frac{z_0}{-\alpha t}}k^2-\frac{1}{4}i\frac{1-z_0^2}{z_0}k\big)\sigma_3}k^{i\upsilon(z_0)\sigma_3}\tilde{V}_1,
\end{align}
where
\begin{align}
  \tilde{V}_1=\left\{\begin{aligned}
&\left(
  \begin{array}{cc}
    1 & 0  \\
    r(z_0)\delta_0^{-2}(z_0)\big(-\frac{\alpha t}{z_0}\big)^{i\upsilon(z_0)}e^{-\frac{i\alpha t}{z_0}} & 1 \\
  \end{array}
\right), ~~&z\in\tilde{\Omega}_{1},\\
&\left(
  \begin{array}{cc}
    1 & -\frac{\bar{r}(z_0)}{1-|r(z_0)|^2}\delta_0^{2}(z_0)\big(-\frac{\alpha t}{z_0}\big)^{-i\upsilon(z_0)}e^{\frac{i\alpha t}{z_0}} \\
    0 & 1 \\
  \end{array}
\right), ~~&z\in\tilde{\Omega}_{3},\\
&\left(
  \begin{array}{cc}
    1 & 0 \\
    -\frac{r(z_0)}{1-|r(z_0)|^2}\delta_0^{-2}(z_0)\big(-\frac{\alpha t}{z_0}\big)^{i\upsilon(z_0)}e^{-\frac{i\alpha t}{z_0}} & 1 \\
  \end{array}
\right),~~ &z\in\tilde{\Omega}_{4},\\
&\left(
  \begin{array}{cc}
    1 & \bar{r}(z_0)\delta_0^{2}(z_0)\big(-\frac{\alpha t}{z_0}\big)^{-i\upsilon(z_0)}e^{\frac{i\alpha t}{z_0}} \\
    0 & 1 \\
  \end{array}
\right), ~~&z\in\tilde{\Omega}_{6},\\
&\left(
  \begin{array}{cc}
    1 & 0 \\
    0 & 1 \\
  \end{array}
\right), ~~&z\in\tilde{\Omega}_{2}\cup\tilde{\Omega}_{5}.
\end{aligned}
\right.
\end{align}
The real axis $\mathbb{R}$ and $\tilde{\Sigma}^{(2)}_+$ divide the complex plane $\mathbb{C}$ into six parts $\tilde{\Omega}_m$, $m=1,2,\ldots,6.$ which can be found in Fig~\ref{fig3}.

\begin{center}
\begin{figure}
\begin{tikzpicture}[scale=0.7]
\draw [dashed](0,0)--(8,0);
\draw[->][thick](4,0)--(6,2);
\draw[-][thick](6,2)--(7,3);
\draw[->][thick](4,0)--(6,-2);
\draw[-][thick](6,-2)--(7,-3);
\draw[->][thick](4,0)--(3,1);
\draw[-][thick](3,1)--(1,3);
\draw[->][thick](4,0)--(3,-1);
\draw[-][thick](3,-1)--(1,-3);
\draw[fill] (4,0)node[below]{$0$} circle [radius=0.08];
\draw[fill] (2,0)node[below]{$\tilde{\Omega}_{4}$};
\draw[fill] (2,0)node[above]{$\tilde{\Omega}_{3}$};
\draw[fill] (6,0)node[below]{$\tilde{\Omega}_{6}$};
\draw[fill] (6,0)node[above]{$\tilde{\Omega}_{1}$};
\draw[fill] (4,2)node[above]{$\tilde{\Omega}_{2}$};
\draw[fill] (4,-2)node[below]{$\tilde{\Omega}_{5}$};
\draw[fill] (7,3)node[left]{$\tilde{\Sigma}_{1}$};
\draw[fill] (7,-3)node[left]{$\tilde{\Sigma}_{4}$};
\draw[fill] (1,3)node[right]{$\tilde{\Sigma}_{2}$};
\draw[fill] (1,-3)node[right]{$\tilde{\Sigma}_{3}$};
\end{tikzpicture}
\caption{The new contours $\tilde{\Sigma}_{j}$, $j=1, \cdots, 4$ and parts $\tilde{\Omega}_{m}$, $m=1, 2\cdots, 6$.}\label{fig3}
\end{figure}
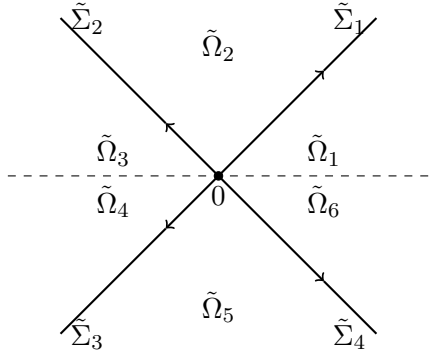
\end{center}

It is evident that $N_{z_0}(k)$ solves the following RH problem.
\begin{RHP}\label{RHP6}
Find a matrix value function $N_{z_0}(k)$ admitting
\begin{itemize}
\item Analyticity:~$N_{z_0}(k)$ is analytic in $\mathbb{C}\setminus\mathbb{R}$.
  \item Jump condition:
  \begin{align}
  N_{z_0+}(k)=N_{z_0-}(k)V_{z_0},~~~~k\in\mathbb{R},
  \end{align}
  where
  \begin{align}
  V_{z_0}=
  \left(
  \begin{array}{cc}
    1-|r(z_0)|^2 & -\bar{r}(z_0)\delta_0^2(z_0)\big(-\frac{\alpha t}{z_0}\big)^{-i\upsilon(z_0)}e^{\frac{i\alpha t}{z_0}}  \\
    {r}(z_0)\delta_0^{-2}(z_0)\big(-\frac{\alpha t}{z_0}\big)^{i\upsilon(z_0)}e^{-\frac{i\alpha t}{z_0}} & 1 \\
  \end{array}
\right), ~~&k\in\mathbb{R}.
\end{align}
  \item Asymptotic behavior:
  \begin{align}
  N_{z_0}(k)e^{-\big(-\frac{1}{4}ik^2+\frac{1}{4}i\sqrt{\frac{z_0}{-\alpha t}}k^2-\frac{1}{4}i\frac{1-z_0^2}{z_0}k\big)\sigma_3}k^{-i\upsilon(z_0)\sigma_3}
  \rightarrow\mathbb{I}~~as~~k\rightarrow\infty.
  \end{align}
\end{itemize}
\end{RHP}

For the case where $z$ is centered at $z_0$, we can obtain the solution of the RH problem~\ref{RHP6} for the $N_{z_0}(k)$ in terms of the parabolic cylinder model. We choose
\begin{align*}
N_{z_0}(k)=\left(
                \begin{array}{cc}
                  N_{z_0,11}(k) & N_{z_0,12}(k) \\
                  N_{z_0,21}(k) & N_{z_0,22}(k) \\
                \end{array}
              \right),
\end{align*}
which admits
\begin{align}
N_{z_0,11}(k)&= \left\{\begin{aligned}
e^{\frac{1}{4}\pi \upsilon(z_0)}D_{-a_1}(e^{-\frac{1}{4}\pi i}k),~~&Im~k>0,\\
e^{-\frac{3}{4}\pi \upsilon(z_0)}D_{-a_1}(e^{\frac{3}{4}\pi i}k),~~&Im~k<0,
\end{aligned}
\right.\\
N_{z_0,12}(k)&=\left\{\begin{aligned}
\frac{1}{\beta_{21}^{z_0}}e^{-\frac{3}{4}\pi \upsilon(z_0)}\bigg[\frac{d}{dk}D_{a_1}(e^{-\frac{3}{4}\pi i}k)+\frac{1}{2}ikD_{a_1}(e^{-\frac{3}{4}\pi i}k)\bigg],~~&Im~k>0,\\
\frac{1}{\beta_{21}^{z_0}}e^{\frac{1}{4}\pi \upsilon(z_0)}\bigg[\frac{d}{dk}D_{a_1}(e^{\frac{1}{4}\pi i}k)+\frac{1}{2}ikD_{a_1}(e^{\frac{1}{4}\pi i}k)\bigg],~~&Im~k>0,
\end{aligned}
\right.\\
N_{z_0,21}(k)&=\left\{\begin{aligned}
\frac{1}{\beta_{12}^{z_0}}e^{\frac{1}{4}\pi \upsilon(z_0)}\bigg[\frac{d}{dk}D_{-a_1}(e^{-\frac{1}{4}\pi i}k)-\frac{1}{2}ikD_{-a_1}(e^{-\frac{1}{4}\pi i}k)\bigg],~~&Im~k>0,\\
\frac{1}{\beta_{12}^{z_0}}e^{-\frac{3}{4}\pi \upsilon(z_0)}\bigg[\frac{d}{dk}D_{-a_1}(e^{\frac{3}{4}\pi i}k)-\frac{1}{2}ikD_{-a_1}(e^{\frac{3}{4}\pi i}k)\bigg],~~&Im~k>0,
\end{aligned}
\right.\\
N_{z_0,22}(k)&= \left\{\begin{aligned}
e^{-\frac{3}{4}\pi \upsilon(z_0)}D_{a_1}(e^{-\frac{3}{4}\pi i}k),~~&Im~k>0,\\
e^{\frac{1}{4}\pi \upsilon(z_0)}D_{a_1}(e^{\frac{1}{4}\pi i}k),~~&Im~k<0,
\end{aligned}
\right.
\end{align}
where
\begin{align}\label{beta1}
\beta_{12}^{z_0}=-\frac{\sqrt{2\pi}e^{-\frac{1}{2}\pi \upsilon(z_0)}e^{-\frac{\pi i}{4}}}{\bar{r}(z_0)\Gamma(a_1)},~~\beta_{21}^{z_0}=\frac{\upsilon(z_0)}{\beta_{12}^{z_0}},~~a_1=i\upsilon(z_0).
\end{align}

Similarly, for $z\rightarrow-z_0$, we consider a scaling transformation
\begin{align}
\tilde{M}_{-z_0}(k)=M^{(2)}_{RHP}(z),~~~~z=\sqrt{\frac{z_0}{-\alpha t}}k-z_0,
\end{align}
thus, we derive the following RH problem.
\begin{RHP}
Find a matrix value function $\tilde{M}_{-z_0}$ which admits
\begin{itemize}
\item Analyticity:~$\tilde{M}_{-z_0}$ is analytic in $\mathbb{C}\setminus\tilde{\Sigma}^{(2)}_-$, where $\tilde{\Sigma}^{(2)}_-=\Sigma^{(2)}_-+z_0$.
  \item Jump condition:
  \begin{align}
  \tilde{M}_{-z_0+}(k)=\tilde{M}_{-z_0-}(k)\tilde{J}_2^{(2)}(k),~~~~k\in\tilde{\Sigma}^{(2)}_-,
  \end{align}
  where $\tilde{J}_2^{(2)}(k)=J^{(2)}\big(\sqrt{\frac{z_0}{-\alpha t}}k-z_0\big)$ and can be expressed as
  \begin{align}
  \tilde{J}_2^{(2)}(k)=e^{\big(\frac{1}{4}ik^2-\frac{1}{4}i\sqrt{\frac{z_0}{-\alpha t}}k^2-\frac{1}{4}i\frac{1-z_0^2}{z_0}k\big)\hat{\sigma}_3}k^{i\upsilon(-z_0)\hat{\sigma}_3}\tilde{V}^{(2)}_2,
  \end{align}
  and
  \begin{align}
  \tilde{V}^{(2)}_2=\left\{\begin{aligned}
&\left(
  \begin{array}{cc}
    1 & -\frac{\bar{r}(-z_0)}{1-|r(-z_0)|^2}\delta_0^{2}(-z_0)\big(-\frac{\alpha t}{z_0}\big)^{-i\upsilon(-z_0)}e^{\frac{i\alpha t}{z_0}}  \\
    0 & 1 \\
  \end{array}
\right), ~~&z\in\tilde{\Sigma}_{5},\\
&\left(
  \begin{array}{cc}
    1 & 0 \\
    r(-z_0)\delta_0^{-2}(-z_0)\big(-\frac{\alpha t}{z_0}\big)^{i\upsilon(-z_0)}e^{-\frac{i\alpha t}{z_0}} & 1 \\
  \end{array}
\right), ~~&z\in\tilde{\Sigma}_{6},\\
&\left(
  \begin{array}{cc}
    1 & \bar{r}(-z_0)\delta_0^{2}(-z_0)\big(-\frac{\alpha t}{z_0}\big)^{-i\upsilon(-z_0)}e^{\frac{i\alpha t}{z_0}} \\
   0 & 1 \\
  \end{array}
\right),~~ &z\in\tilde{\Sigma}_{7},\\
&\left(
  \begin{array}{cc}
    1 & 0 \\
     -\frac{r(-z_0)}{1-|r(-z_0)|^2}\delta_0^{-2}(-z_0)\big(-\frac{\alpha t}{z_0}\big)^{i\upsilon(-z_0)}e^{-\frac{i\alpha t}{z_0}} & 1 \\
  \end{array}
\right), ~~&z\in\tilde{\Sigma}_{8},
\end{aligned}
\right.
\end{align}
where $\tilde{\Sigma}_{j}=\Sigma_{j}+z_0, j=5,6,7,8.$
  \item Asymptotic behavior:
  \begin{align}
  \tilde{M}_{-z_0}\rightarrow\mathbb{I}~~as~~k\rightarrow\infty.
  \end{align}
\end{itemize}
\end{RHP}

Then, we choose
\begin{align}
N_{-z_0}(k)=\tilde{M}_{-z_0}(k)e^{\big(\frac{1}{4}ik^2-\frac{1}{4}i\sqrt{\frac{z_0}{-\alpha t}}k^2-\frac{1}{4}i\frac{1-z_0^2}{z_0}k\big)\sigma_3}k^{i\upsilon(-z_0)\sigma_3}\tilde{V}_2,
\end{align}
where
\begin{align}
  \tilde{V}_2=\left\{\begin{aligned}
&\left(
  \begin{array}{cc}
    1 & -\frac{\bar{r}(-z_0)}{1-|r(-z_0)|^2}\delta_0^{2}(-z_0)\big(-\frac{\alpha t}{z_0}\big)^{-i\upsilon(-z_0)}e^{\frac{i\alpha t}{z_0}}  \\
    0 & 1 \\
  \end{array}
\right), ~~&z\in\tilde{\Omega}_{8},\\
&\left(
  \begin{array}{cc}
    1 & 0 \\
    r(-z_0)\delta_0^{-2}(-z_0)\big(-\frac{\alpha t}{z_0}\big)^{i\upsilon(-z_0)}e^{-\frac{i\alpha t}{z_0}} & 1 \\
  \end{array}
\right), ~~&z\in\tilde{\Omega}_{7},\\
&\left(
  \begin{array}{cc}
    1 & \bar{r}(-z_0)\delta_0^{2}(-z_0)\big(-\frac{\alpha t}{z_0}\big)^{-i\upsilon(-z_0)}e^{\frac{i\alpha t}{z_0}} \\
   0 & 1 \\
  \end{array}
\right),~~ &z\in\tilde{\Omega}_{10},\\
&\left(
  \begin{array}{cc}
    1 & 0 \\
     -\frac{r(-z_0)}{1-|r(-z_0)|^2}\delta_0^{-2}(-z_0)\big(-\frac{\alpha t}{z_0}\big)^{i\upsilon(-z_0)}e^{-\frac{i\alpha t}{z_0}} & 1 \\
  \end{array}
\right), ~~&z\in\tilde{\Omega}_{9},\\
&\left(
  \begin{array}{cc}
    1 & 0 \\
    0 & 1 \\
  \end{array}
\right), ~~&z\in\tilde{\Omega}_{2}\cup\tilde{\Omega}_{5}.
\end{aligned}
\right.
\end{align}
The real axis $\mathbb{R}$ and $\tilde{\Sigma}^{(2)}_-$ divide the complex plane $\mathbb{C}$ into six parts $\tilde{\Omega}_m$, $m=2,5,7,8,9,10.$ which can be found in Fig~\ref{fig4}.

\begin{center}
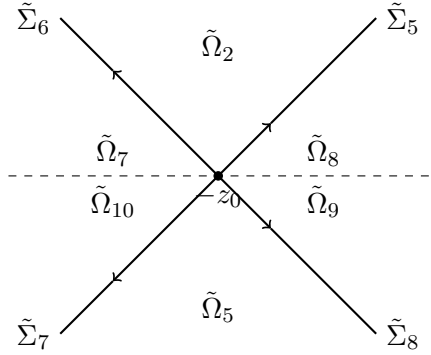
\begin{figure}
\begin{tikzpicture}[scale=0.7]
\draw [dashed](0,0)--(-8,0);
\draw[->][thick](-4,0)--(-6,2);
\draw[-][thick](-6,2)--(-7,3);
\draw[->][thick](-4,0)--(-6,-2);
\draw[-][thick](-6,-2)--(-7,-3);
\draw[->][thick](-4,0)--(-3,1);
\draw[-][thick](-3,1)--(-1,3);
\draw[->][thick](-4,0)--(-3,-1);
\draw[-][thick](-3,-1)--(-1,-3);
\draw[fill] (-4,0)node[below]{$-z_{0}$} circle [radius=0.08];
\draw[fill] (-2,0)node[below]{$\tilde{\Omega}_{9}$};
\draw[fill] (-2,0)node[above]{$\tilde{\Omega}_{8}$};
\draw[fill] (-6,0)node[below]{$\tilde{\Omega}_{10}$};
\draw[fill] (-6,0)node[above]{$\tilde{\Omega}_{7}$};
\draw[fill] (-4,2)node[above]{$\tilde{\Omega}_{2}$};
\draw[fill] (-4,-2)node[below]{$\tilde{\Omega}_{5}$};
\draw[fill] (-7,3)node[left]{$\tilde{\Sigma}_{6}$};
\draw[fill] (-7,-3)node[left]{$\tilde{\Sigma}_{7}$};
\draw[fill] (-1,3)node[right]{$\tilde{\Sigma}_{5}$};
\draw[fill] (-1,-3)node[right]{$\tilde{\Sigma}_{8}$};
\end{tikzpicture}
\caption{The new contours $\tilde{\Sigma}_{j}$, $j=5, \cdots, 8$ and parts $\tilde{\Omega}_{m}$, $m=2, 5, 7\cdots, 10$.}\label{fig4}
\end{figure}
\end{center}

It is evident that $N_{-z_0}(k)$ solves the following RH problem.
\begin{RHP}\label{RHP7}
Find a matrix value function $N_{-z_0}(k)$ which admits
\begin{itemize}
\item Analyticity:~$N_{-z_0}(k)$ is analytic in $\mathbb{C}\setminus\mathbb{R}$.
  \item Jump condition:
  \begin{align}
  N_{-z_0+}(k)=N_{-z_0-}(k)V_{-z_0},~~~~k\in\mathbb{R},
  \end{align}
  where
  \begin{align}
  V_{-z_0}=
  \left(
  \begin{array}{cc}
    1-|r(-z_0)|^2 & -\bar{r}(-z_0)\delta_0^2(-z_0)\big(-\frac{\alpha t}{z_0}\big)^{-i\upsilon(-z_0)}e^{\frac{i\alpha t}{z_0}}  \\
    r(-z_0)\delta_0^{-2}(-z_0)\big(-\frac{\alpha t}{z_0}\big)^{i\upsilon(-z_0)}e^{-\frac{i\alpha t}{z_0}} & 1 \\
  \end{array}
\right), ~~&k\in\mathbb{R}.
\end{align}
  \item Asymptotic behavior:
  \begin{align}
  N_{-z_0}(k)e^{-\big(\frac{1}{4}ik^2-\frac{1}{4}i\sqrt{\frac{z_0}{-\alpha t}}k^2-\frac{1}{4}i\frac{1-z_0^2}{z_0}k\big)\sigma_3}k^{-i\upsilon(-z_0)\sigma_3}\rightarrow\mathbb{I}~~as~~k\rightarrow\infty.
  \end{align}
\end{itemize}
\end{RHP}

For the case where $z$ is centered at $-z_0$, we can obtain the solution of the RH problem~\ref{RHP7} for the $N_{-z_0}(k)$ in terms of the parabolic cylinder model. We choose
\begin{align*}
N_{-z_0}(k)=\left(
                \begin{array}{cc}
                  N_{-z_0,11}(k) & N_{-z_0,12}(k) \\
                  N_{-z_0,21}(k) & N_{-z_0,22}(k) \\
                \end{array}
              \right),
\end{align*}
which admits
\begin{align}
N_{-z_0,11}(k)&= \left\{\begin{aligned}
e^{-\frac{3}{4}\pi \upsilon(-z_0)}D_{a_2}(e^{-\frac{3}{4}\pi i}k),~~&Im~k>0,\\
e^{\frac{1}{4}\pi \upsilon(-z_0)}D_{a_2}(e^{\frac{1}{4}\pi i}k),~~&Im~k<0,
\end{aligned}
\right.\\
N_{-z_0,12}(k)&=\left\{\begin{aligned}
\frac{1}{\beta_{21}^{-z_0}}e^{\frac{1}{4}\pi \upsilon(-z_0)}\bigg[\frac{d}{dk}D_{-a_2}(e^{-\frac{1}{4}\pi i}k)-\frac{1}{2}ikD_{-a_2}(e^{-\frac{1}{4}\pi i}k)\bigg],~~&Im~k>0,\\
\frac{1}{\beta_{21}^{-z_0}}e^{\frac{3}{4}\pi \upsilon(-z_0)}\bigg[\frac{d}{dk}D_{-a_2}(e^{\frac{3}{4}\pi i}k)-\frac{1}{2}ikD_{-a_2}(e^{\frac{3}{4}\pi i}k)\bigg],~~&Im~k>0,
\end{aligned}
\right.\\
N_{-z_0,21}(k)&=\left\{\begin{aligned}
\frac{1}{\beta_{12}^{-z_0}}e^{-\frac{3}{4}\pi \upsilon(-z_0)}\bigg[\frac{d}{dk}D_{a_2}(e^{-\frac{3}{4}\pi i}k)
+\frac{1}{2}ikD_{a_2}(e^{-\frac{3}{4}\pi i}k)\bigg],~~&Im~k>0,\\
\frac{1}{\beta_{12}^{-z_0}}e^{\frac{1}{4}\pi \upsilon(-z_0)}\bigg[\frac{d}{dk}D_{a_2}(e^{\frac{1}{4}\pi i}k)+\frac{1}{2}ikD_{a_2}(e^{\frac{1}{4}\pi i}k)\bigg],~~&Im~k>0,
\end{aligned}
\right.\\
N_{-z_0,22}(k)&= \left\{\begin{aligned}
e^{\frac{1}{4}\pi \upsilon(-z_0)}D_{-a_2}(e^{-\frac{1}{4}\pi i}k),~~&Im~k>0,\\
e^{-\frac{3}{4}\pi \upsilon(-z_0)}D_{-a_2}(e^{\frac{3}{4}\pi i}k),~~&Im~k<0,
\end{aligned}
\right.
\end{align}
where
\begin{align}\label{beta2}
\beta_{12}^{-z_0}=\frac{\sqrt{2\pi}e^{-\frac{1}{2}\pi \upsilon(-z_0)}e^{\frac{\pi i}{4}}}{\bar{r}(-z_0)\Gamma(-a_2)},~~\beta_{21}^{-z_0}=\frac{\upsilon(-z_0)}{\beta_{12}^{-z_0}},~~
a_2=i\upsilon(-z_0).
\end{align}
We define $\tilde{M}_{\pm z_0}$ as
\begin{align}
\tilde{M}_{\pm z_0}=\mathbb{I}+\frac{\tilde{M}_{\pm z_0}^{1}}{k}+O(\frac{1}{k^2}),~~~~k\rightarrow\infty.
\end{align}

Then, we have $\beta_{12}^{z_0}=-i\left(\tilde{M}_{z_0}^1\right)_{12}$, $\beta_{12}^{-z_0}=i\left(\tilde{M}_{-z_0}^1\right)_{12}$, $\left(\tilde{M}_{\pm z_0}^{1}\right)_{11}=0$ where $\left(\tilde{M}_{\pm z_0}^1\right)_{ij}$ represent the $(i,j)$-th elements of the matrices $\tilde{M}_{\pm z_0}^1$.

Hence, $M^{(2)}_{RHP}(z)$ can be written as $M^{(2)}_{RHP}(z)=\tilde{M}_{z_0}(k)+\tilde{M}_{-z_0}(k)$.

\subsection{Pure $\bar{\partial}$-RH problem}
In this section, we study a pure $\bar{\partial}$-RH problem and show the asymptotic of its solution.

Firstly, we construct a transform
\begin{align}
M^{(3)}(z)=M^{(2)}(z)M^{(2)}_{RHP}(z)^{-1}
\end{align}
and introduce the RH problem of $M^{(3)}(z)$.

\begin{RHP}\label{RHP10}
Find a matrix value function $M^{(3)}(z)$ which admits
\begin{itemize}
\item Analyticity:~$M^{(3)}$ is continuous with sectionally continuous first partial derivatives in $\mathbb{C}\backslash\Sigma^{(2)}$;
  \item Asymptotic behavior:
  \begin{align}
  M^{(3)}(x,t,z)\rightarrow\mathbb{I}~~as~~z\rightarrow\infty.
  \end{align}
  \item $\bar{\partial}M^{(3)}(z)=M^{(3)}(z)R^{(3)}(z)$ as $z\in \mathbb{C}$, where
  \begin{align}
       R^{(3)}=M_{RHP}^{(2)}(z)\bar{\partial}R^{(2)}M_{RHP}^{(2)}(z)^{-1}.
  \end{align}
\end{itemize}
\end{RHP}

The solution $M^{(3)}$ of RH problem~\ref{RHP10} can be written as
\begin{align}
M^{(3)}(z)=\mathbb{I}+\frac{1}{2\pi i}\int\int\frac{M^{(3)}W^{(3)}}{s-z}\mathrm{d}m(s),
\end{align}
where $m(s)$ is Lebesgue measure. We rewrite the above function as
\begin{align}
(\mathbb{I}-\mathrm{S})\left[M^{(3)}(z)\right]=\mathbb{I},
\end{align}
where $\mathrm{S}[f](z)=-\frac{1}{\pi}\int\int_{\mathbb{C}}\frac{f(s)W^{(3)}(s)}{s-z}\mathrm{d}m(s)$.
\begin{prop}\label{prop4.7}
For $t\rightarrow+\infty$, the operator $\mathrm{S}$ admits that
\begin{align}
||\mathrm{S}||_{L^{\infty}\rightarrow L^{\infty}}\leq ct^{-1/6},
\end{align}
where $c$ is a constant. Therefore, $(\mathbb{I}-\mathrm{S})^{-1}$ exist.
\end{prop}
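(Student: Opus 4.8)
The plan is to reduce everything to a single uniform area estimate and then exploit the sign of the phase. Since
\[
\mathrm{S}[f](z)=-\frac{1}{\pi}\int\int_{\mathbb{C}}\frac{f(s)W^{(3)}(s)}{s-z}\mathrm{d}m(s),
\]
pulling out $\|f\|_{L^{\infty}}$ immediately gives $\|\mathrm{S}\|_{L^{\infty}\to L^{\infty}}\leq \frac{1}{\pi}\sup_{z\in\mathbb{C}}\int\int_{\mathbb{C}}\frac{|W^{(3)}(s)|}{|s-z|}\mathrm{d}m(s)$, so the whole proposition becomes a bound on this integral that is uniform in $z$. First I would use that the model solution $M^{(2)}_{RHP}$ and its inverse are uniformly bounded on $\mathbb{C}$ (they are assembled from the parabolic-cylinder parametrices $N_{\pm z_0}$ near $\mathcal{U}_{\pm z_0}$ and equal $\mathbb{I}$ up to exponentially small error elsewhere), so that $|W^{(3)}(s)|\leq C|\bar\partial R^{(2)}(s)|$. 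This eliminates the model solution and leaves an integral against $|\bar\partial R^{(2)}|$.

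Because $\bar\partial R^{(2)}=0$ on $\Omega_2\cup\Omega_5$, the domain splits into the eight sectors $\Omega_j$ clustering at $\pm z_0$, and by the symmetry $z\mapsto-z$ it suffices to treat the sectors abutting $z_0$, say $\Omega_1$. There I would insert the three-term bound proven above, $|\bar\partial R_1|\leq c_1|\bar\partial\chi_{\mathcal{Z}}|+c_2|p_1'(\mathrm{Re}\,s)|+c_3|s-z_0|^{-1/2}$, together with the phase estimate. Since $\theta'(z_0)=0$ and $\theta''(z_0)=-\alpha/(2z_0^3)>0$ (as $\alpha<0$), the quadratic behaviour of the phase yields $|e^{2it\theta(s)}|\leq e^{-c t\,uv}$ on $\Omega_1$, where $s=z_0+u+iv$ with $0<v<u$. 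The $\chi_{\mathcal{Z}}$ contribution is supported where $|e^{2it\theta}|$ is already exponentially small, so (exactly as in Proposition~\ref{prop4.1}) it is $O(e^{-ct})$ and harmless; only the remaining two contributions must be controlled.

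For the singular term I would apply Hölder's inequality, factoring the integral as $\big\||s-z_0|^{-1/2}e^{2it\theta}\big\|_{L^{q}(\Omega_1)}\,\big\||s-z|^{-1}\big\|_{L^{p}}$ with $1/p+1/q=1$ and $p<2$; the second factor is bounded uniformly in $z$, while the rescaling $u\mapsto(ct)^{-1/2}u$, $v\mapsto(ct)^{-1/2}v$ shows the first factor is $O(t^{1/4-1/q})$. For the derivative term, the only regularity available from $r\in H^{1,1}(\mathbb{R})$ is $p_1'\in L^{2}$, so I would integrate first in $v$ by Hölder (producing a factor $(ctu)^{-1/q'}|\mathrm{Re}(s-z)|^{1/p'-1}$) and then apply Cauchy--Schwarz in $u$ against $\|p_1'\|_{L^{2}}$, obtaining $O(t^{-1/q'})$ for $q'\in(2,4)$; a scaling argument shows the remaining $u$-integral is bounded uniformly in $\mathrm{Re}\,z$. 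Choosing the exponents admissibly (for instance $q=12/5$, $p=12/7$) makes the singular contribution $O(t^{-1/6})$, which dominates the faster-decaying derivative term and the exponentially small pieces, giving $\|\mathrm{S}\|_{L^{\infty}\to L^{\infty}}\leq ct^{-1/6}$.

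The main obstacle is precisely the interaction between the non-integrable-looking singularity $|s-z_0|^{-1/2}$ at the stationary point and the low regularity $p_1'\in L^{2}$: one cannot place the derivative factor in any $L^{q}$ with $q>2$, so the singular term and the derivative term must be handled by genuinely different inequalities, and one must verify that every constant is uniform in $z$ (in particular that the $u$-integrals do not degenerate as $\mathrm{Re}\,z$ ranges over $\mathbb{R}$). Once $\|\mathrm{S}\|_{L^{\infty}\to L^{\infty}}\leq ct^{-1/6}<1$ for $t$ large, the Neumann series $\sum_{n\geq0}\mathrm{S}^{n}$ converges and furnishes $(\mathbb{I}-\mathrm{S})^{-1}$, which is the assertion.
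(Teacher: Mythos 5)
Your high-level scheme coincides with the paper's: pull out $\|f\|_{L^{\infty}}$, use the uniform boundedness of $M^{(2)}_{RHP}$ and its inverse to reduce everything to the area integral of $|\bar{\partial}R^{(2)}(s)|/|s-z|$, split that via the three-term bound $|\bar{\partial}R_{1}|\leq c_{1}|\bar{\partial}\chi_{\mathcal{Z}}|+c_{2}|p_{1}'(\mathrm{Re}\,s)|+c_{3}|s-z_{0}|^{-1/2}$ on $\Omega_{1}$, and close with a Neumann series. But the execution has genuine gaps. First, your two-dimensional H\"older factorization fails: for $p<2$ the factor $\bigl\||s-z|^{-1}\bigr\|_{L^{p}(\Omega_{1})}$ is infinite, because $\Omega_{1}$ is an unbounded sector and $\int|s-z|^{-p}\,\mathrm{d}m(s)$ diverges at infinity for every $p\leq2$ (the integrand scales like $r^{1-p}\,\mathrm{d}r$); so the second factor is not ``bounded uniformly in $z$''. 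Second, your global phase bound $|e^{2it\theta(s)}|\leq e^{-ctuv}$ on $\Omega_{1}$ is false at large distances: with $s=u+iv$ one computes $\mathrm{Re}(2it\theta)=\alpha tv\,\frac{u^{2}+v^{2}-z_{0}^{2}}{2(u^{2}+v^{2})z_{0}^{2}}$, and the bracket saturates to the constant $\frac{1}{2z_{0}^{2}}$ as $u\to\infty$, so the true decay away from $z_{0}$ is only $e^{-ctv}$, not $e^{-ctuv}$. This invalidates the rescaling that gave you $O(t^{1/4-1/q})$ over all of $\Omega_{1}$, and it removes the $(ctu)^{-1/q'}$ gain on which your derivative-term estimate depends: without it, the final $u$-integral $\int|p_{1}'(u)|\,|u-\mu|^{1/p'-1}\,du$ cannot be closed by Cauchy--Schwarz, since $|u-\mu|^{2(1/p'-1)}$ with $p'<2$ has exponent in $(-1,0)$ and is not integrable at infinity — precisely the uniformity issue you flagged but did not resolve.

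The paper's proof avoids both problems by slicing in $u$ at fixed $v$: it applies Cauchy--Schwarz (for the $\bar{\partial}\chi_{\mathcal{Z}}$ and $|p_{1}'|$ terms) or one-dimensional H\"older with $p>2$ (for the singular term) in the $u$-variable alone, where all factors are finite — $\|1/(s-z)\|_{L^{2}_{u}}\leq\sqrt{\pi/|v-\eta|}$, $\||s-z_{0}|^{-1/2}\|_{L^{p}_{u}}\leq cv^{1/p-1/2}$, $\|1/(s-z)\|_{L^{q}_{u}}\leq c|v-\eta|^{1/q-1}$ — and then integrates the resulting powers of $v$ against the surviving factor $e^{-|\alpha|tv\kappa/2}$, extracting the rate with the elementary inequality $e^{-x}\leq x^{-1/6}$ on the interval $0<v<\eta$. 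Note also that in the paper the stated rate $t^{-1/6}$ comes from $I_{1}$ and $I_{2}$ (the cutoff and derivative terms), while the singular term $I_{3}$ decays like $t^{-1/2}$; your dismissal of the $\bar{\partial}\chi_{\mathcal{Z}}$ contribution as $O(e^{-ct})$ is not supported by anything established in the paper (no localization of the support of $\chi_{\mathcal{Z}}$ is given), and your attribution of the $t^{-1/6}$ to the singular term through the choice $q=12/5$ reverses the paper's accounting. Your reduction steps and the Neumann-series conclusion are fine; the H\"older geometry, the phase estimate, and the treatment of the cutoff term need to be redone along the sliced, fixed-$v$ lines above.
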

\begin{proof}
We only consider the case that in the region $\Omega_1$. We choose $s=u+iv$ and derive
\begin{align}
|S[f](z)|&\leq\frac{1}{\pi}
\iint_{\Omega_{1}}\frac{|fM^{(2)}_{RHP}(s)\bar{\partial}R^{(2)}(s)M^{(2)}_{RHP}(s)^{-1}|}
{|s-z|}dm(s)\notag\\
& \leq \frac{1}{\pi}\big|\big|f\ \big|\big|_{L^{\infty}(\Omega_{1})}\big|\big|M_{RHP}^{(2)}\ \big|\big|_{L^{\infty}(\Omega_{1})}\big|\big|M_{RHP}^{(2)-}\ \big|\big|_{L^{\infty}(\Omega_{1})}
\iint_{\Omega_{1}}\frac{|\bar{\partial}R_{1}(s)|e^{\alpha tv\frac{u^2+v^2-z_0^2}{2(u^2+v^2)z_0^2}}}
{|s-z|}dudv\notag\\
&\leq c\iint_{\Omega_{1}}
\frac{|\bar{\partial}R_{1}(s)||e^{\alpha tv\frac{u^{2}+v^{2}-z_{0}^{2}}{2(u^{2}+v^{2})z_{0}^{2}}}|}{|s-z|}dudv,
\end{align}
where $c$ is a constant.
Further we can obtain that
\begin{align}
||\mathrm{S}||_{L^{\infty}\rightarrow L^{\infty}}\leq c(I_{1}+I_{2}+I_{3})\leq c|t|^{-1/6},
\end{align}
where
\begin{align}
I_{1}=\iint_{\Omega_{1}}
\frac{|\bar{\partial}\chi_{\mathcal{Z}}(s)|
e^{\alpha tv\frac{u^{2}+v^{2}-z_{0}^{2}}{2(u^{2}+v^{2})z_{0}^{2}}}}{|s-z|}df(s), \\
I_{2}=\iint_{\Omega_{1}}
\frac{|r'(Rez)|e^{\alpha tv\frac{u^{2}+v^{2}-z_{0}^{2}}{2(u^{2}+v^{2})z_{0}^{2}}}}{|s-z|}df(s),\\
I_{3}=\iint_{\Omega_{1}}
\frac{|s-z_{0}|^{-\frac{1}{2}}e^{\alpha tv\frac{u^{2}+v^{2}-z_{0}^{2}}{2(u^{2}+v^{2})z_{0}^{2}}}
}{|s-z|}df(s).
\end{align}

Firstly, we consider the estimate of $I_1$.
We set $z=\mu+i\eta$ and derive
\begin{align}
||\frac{1}{s-z}||^2_{L^2(v+z_0,+\infty)}=\int_{v+z_0}^{+\infty}\frac{1}{(u-\mu)^2+(v-\eta)^2}du=\frac{1   }{|v-\eta|}\int_{v+z_0}^{+\infty}\frac{1}{\big(\frac{u-\mu}{v-\eta}\big)^2+1}d\frac{u-\mu}{v-\eta}
\leq\frac{\pi}{|v-\eta|}.
\end{align}
According to the truth that $\frac{u^{2}+v^{2}-z_{0}^{2}}{(u^{2}+v^{2})z_{0}^{2}}>\frac{v^{2}}{(u^{2}+v^{2})z_{0}^{2}}>0$, we assume that there exist a constant $\kappa$ such that $\frac{u^{2}+v^{2}-z_{0}^{2}}{(u^{2}+v^{2})z_{0}^{2}}>\kappa$.
Further we derive
\begin{align}
\begin{split}
|I_{1}|&\leq\int_{0}^{+\infty}\int_{v+z_{0}}^{+\infty}
\frac{|\bar{\partial}\chi_{\mathcal{Z}}(s)|e^{-|\alpha|tv\frac{u^{2}+v^{2}-z_{0}^{2}}{2(u^{2}+v^{2})z_{0}^{2}}}}
{|s-z|}dudv\\
&\leq\int_{0}^{+\infty}e^{-|\alpha|tv\frac{\kappa}{2}}\big|\big|\bar{\partial}\chi_{\mathcal{Z}}(s)
\big|\big|_{L^{2}(v+z_{0})}\Big|\Big|\frac{1}{s-z}\Big|\Big|_{L^{2}(v+z_{0})}dv \\
&\leq C\bigg(\int_{0}^{\eta}e^{-|\alpha|tv\frac{\kappa}{2}}\frac{1}{\sqrt{\eta-v}}dv
+\int_{\eta}^{+\infty}e^{-|\alpha|tv\frac{\kappa}{2}}\frac{1}{\sqrt{v-\eta}}dv\bigg).
\end{split}
\end{align}
On the one hand, we employ the inequality $e^{-z}\leq z^{-1/6}$ and deduce that
\begin{align*}
\int_{0}^{\eta}e^{-|\alpha|tv\frac{\kappa}{2}}\frac{1}{\sqrt{\eta-v}}dv\lesssim t^{-\frac{1}{6}}.
\end{align*}
For the other hand, we choose $\omega=v-\eta$ and derive
\begin{align*}
\int_{\eta}^{+\infty}e^{-|\alpha|tv\frac{\kappa}{2}}\frac{1}{\sqrt{v-\eta}}dv\leq
\int_{0}^{+\infty}e^{-|\alpha|t\omega\frac{\kappa}{2}}\frac{1}{\sqrt{\omega}}d\omega\lesssim t^{-\frac{1}{2}}.
\end{align*}
Thus, we obtain $I_{1}\lesssim t^{-\frac{1}{6}}$. Similarly, we can obtain that $I_{1}\lesssim |t|^{-\frac{1}{6}}$.
Finally, we consider the estimate of $I_3$.

We consider the following norms with $p>2$ and $\frac{1}{p}+\frac{1}{q}=1$
\begin{align}
\bigg|\bigg|\frac{1}{\sqrt{|s-z_{0}|}}\bigg|\bigg|_{L^{p}}
\leq \left(\int_{v+z_{0}}^{+\infty}
\frac{1}{|u-z_{0}+iv|^{\frac{p}{2}}}du\right)^{\frac{1}{p}}
\leq cv^{\frac{1}{p}-\frac{1}{2}},
\end{align}
and
\begin{align}
\bigg|\bigg|\frac{1}{|s-z|}\bigg|\bigg|_{L^{q}}\leq c|v-\eta|^{\frac{1}{q}-1}.
\end{align}
Therefore, in combination with the above results we can infer that
\begin{align}
\begin{split}
|I_{3}|&\leq\int_{0}^{+\infty}\int_{v}^{+\infty}
\frac{e^{-|\alpha|tv\frac{\kappa}{2}}}{|s-z_{0}|^{-\frac{1}{2}}|s-z|}dudv\\
&\leq\int_{0}^{+\infty}e^{-|\alpha|tv\frac{\kappa}{2}}\bigg|\bigg|\frac{1}{\sqrt{|s-z_{0}|}}\bigg|\bigg|_{L^{p}}
\bigg|\bigg|\frac{1}{|s-z|}\bigg|\bigg|_{L^{q}}dv\\ &\leq\int_{0}^{\eta}e^{-|\alpha|tv\frac{\kappa}{2}}v^{\frac{1}{p}-\frac{1}{2}}
(\eta-v)^{\frac{1}{q}-1}dv+\int_{\eta}^{+\infty}
e^{-|\alpha|tv\frac{\kappa}{2}}v^{\frac{1}{p}-\frac{1}{2}}(v-\eta)^{\frac{1}{q}-1}dv.
\end{split}
\end{align}
Taking a similar computational idea to $I_1$, we can obtain $I_{3}\lesssim t^{-\frac{1}{2}}$.

Thus, the proposition can be proved.
\end{proof}

We define
\begin{align}
M^{(3)}=\mathbb{I}+\frac{M_1^{(3)}}{z}+O(z^{-2}),
\end{align}
and give the following proposition.
\begin{prop}
For $t\rightarrow+\infty$, $M^{(3)}_{1}(x,t)$ admits the inequality
$\big|M^{(3)}_{1}(x,t)\big|\lesssim t^{-1}$.
\end{prop}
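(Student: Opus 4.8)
The plan is to read off $M_1^{(3)}$ from the solid Cauchy representation of $M^{(3)}$ and to reduce the proposition to a single area integral. Expanding the integral equation $M^{(3)}(z)=\mathbb{I}+\frac{1}{2\pi i}\iint \frac{M^{(3)}W^{(3)}}{s-z}\,\mathrm{d}m(s)$ for large $z$ through $\frac{1}{s-z}=-\frac1z-\frac{s}{z^2}-\cdots$ and matching the coefficient of $z^{-1}$ gives
\[
M_1^{(3)}(x,t)=-\frac{1}{2\pi i}\iint_{\mathbb{C}}M^{(3)}(s)\,W^{(3)}(s)\,\mathrm{d}m(s),\qquad W^{(3)}=M^{(2)}_{RHP}\,\bar\partial R^{(2)}\,\big(M^{(2)}_{RHP}\big)^{-1}.
\]
By Proposition~\ref{prop4.7} the operator $(\mathbb{I}-\mathrm{S})^{-1}$ is bounded on $L^\infty$, so $\|M^{(3)}\|_{L^\infty}\lesssim 1$; combining this with the uniform boundedness of $M^{(2)}_{RHP}$ and of its inverse (the parabolic-cylinder model is bounded on $\mathbb{C}$) reduces the claim to the scalar estimate $\iint_{\mathbb{C}}|\bar\partial R^{(2)}(s)|\,\mathrm{d}m(s)\lesssim t^{-1}$.

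Second, I would localize. The matrix $\bar\partial R^{(2)}$ is supported only on $\Omega_1,\Omega_3,\Omega_4,\Omega_6,\Omega_7,\Omega_8,\Omega_9,\Omega_{10}$ and vanishes on $\Omega_2\cup\Omega_5$, so by the symmetry of the construction (and the reflection $z_0\mapsto -z_0$) it suffices to treat one representative sector, say $\Omega_1$, where $|\bar\partial R^{(2)}|=|\bar\partial R_1|\,|e^{2it\theta}|$. Writing $s=u+iv$, the oscillatory factor supplies the decay $|e^{2it\theta}|=\exp\!\big(\tfrac{\alpha t v(u^2+v^2-z_0^2)}{2z_0^2(u^2+v^2)}\big)$, which is genuinely negative on $\Omega_1$ because $\alpha<0$ and $u^2+v^2>z_0^2$ there. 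Inserting the proven bound $|\bar\partial R_1|\lesssim|\bar\partial\chi_{\mathcal{Z}}|+|r'(\mathrm{Re}\,s)|+|s-z_0|^{-1/2}$ breaks the integral into three pieces. The cutoff piece is supported on the fixed compact set $\mathcal{Z}$; away from the real axis the exponent is bounded below linearly in $t$ and gives an exponentially small contribution, while near the axis $\int_0^{c}e^{-c|\alpha|tv}\,\mathrm{d}v\lesssim t^{-1}$, so this term is $O(t^{-1})$.

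The crux, and the step I expect to be hardest, is the sharp treatment near the stationary point $z_0$, where $\mathrm{Re}(2it\theta)$ vanishes to second order and the decay degrades from linear to quadratic in $v$. I would pass to the scaled variable $w=\sqrt{-\alpha t/z_0}\,(s-z_0)$, in which the $O(1)$-neighborhood of $z_0$ is dilated to scale $t^{1/2}$ and the exponent becomes $t$-independent and Gaussian in $\mathrm{Im}\,w$, while the area element contributes $t^{-1}$; for the $|r'|$ piece one then pairs, via Hölder with conjugate exponents $p>2$, $q<2$ exactly as in the proof of Proposition~\ref{prop4.7}, the $L^2$-control of $r'$ coming from $r\in H^{1,1}(\mathbb{R})$ against the Gaussian, splitting the radial variable at $|s-z_0|\sim t^{-1/2}$ to sum the near- and far-field regimes. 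The genuinely delicate point is that the crude pointwise bound $|\bar\partial R_1|\lesssim|s-z_0|^{-1/2}$ only yields the generic rate $t^{-3/4}$; obtaining the advertised $t^{-1}$ forces one to exploit the finer structure of the extension $R_1$ near $z_0$, namely the cancellation encoded in $R_1(z)-r(z_0)\delta_0^{-2}(z_0)(z-z_0)^{-2i\upsilon(z_0)}$ through the estimate on $\delta(z)-\delta_0(z_0)(z-z_0)^{i\upsilon(z_0)}$, so that the effective singular weight is integrated with the full Gaussian gain. Carrying the identical computation over the remaining sectors and at $-z_0$ then assembles $\iint_{\mathbb{C}}|\bar\partial R^{(2)}|\,\mathrm{d}m\lesssim t^{-1}$, whence $|M_1^{(3)}|\lesssim t^{-1}$.
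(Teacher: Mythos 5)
Your reduction is, as far as one can tell, exactly what the paper intends: its entire proof of this proposition is the single sentence ``prove in a similar way to Proposition~\ref{prop4.7},'' and the steps you make explicit --- expanding the solid Cauchy transform at $z=\infty$ to get $M_1^{(3)}=-\frac{1}{2\pi i}\iint_{\mathbb{C}}M^{(3)}(s)W^{(3)}(s)\,\mathrm{d}m(s)$, invoking boundedness of $M^{(3)}$ through $(\mathbb{I}-\mathrm{S})^{-1}$ and of the parabolic-cylinder model $M^{(2)}_{RHP}$ and its inverse, then splitting $|\bar{\partial}R_1|$ into the $\bar{\partial}\chi_{\mathcal{Z}}$, $|r'(\mathrm{Re}\,s)|$ and $|s-z_0|^{-1/2}$ pieces against $|e^{2it\theta}|$ --- are precisely what that ``similar way'' must consist of. Up to that point you are more detailed and more careful than the source.

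The genuine gap is your final step, which is an assertion rather than a proof, and which I do not believe can be closed as stated. You correctly diagnose that the pointwise bound $|\bar{\partial}R_1|\lesssim|\bar{\partial}\chi_{\mathcal{Z}}|+|r'(\mathrm{Re}\,s)|+|s-z_0|^{-1/2}$, paired with the exponential factor whose exponent $\alpha tv\,\frac{u^2+v^2-z_0^2}{2z_0^2(u^2+v^2)}$ degenerates at the phase point (it vanishes on $|s|=z_0$, so near $z_0$ the decay is only of the type $e^{-ctv(u-z_0)}$), yields the generic rate $t^{-3/4}$; but you then merely gesture at a ``cancellation encoded in $R_1(z)-r(z_0)\delta_0^{-2}(z_0)(z-z_0)^{-2i\upsilon(z_0)}$'' without carrying it out, and no such gain is available from the construction: the $|s-z_0|^{-1/2}$ term in $\bar{\partial}R_1$ arises exactly from the $H^1$ modulus of continuity $|r(u)-r(z_0)|\le\|r'\|_{L^2}|u-z_0|^{1/2}$, which is sharp for $r\in H^{1,1}(\mathbb{R})$, and the estimate on $\delta(z)-\delta_0(z_0)(z-z_0)^{i\upsilon(z_0)}$ quoted in Section~4 carries the same $|z-z_0|^{1/2}$ error, so the interpolating extension cannot produce a better effective weight. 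With these hypotheses your outline proves $|M_1^{(3)}|\lesssim t^{-3/4}$ (the known rate in the $\bar{\partial}$ literature for $H^{1,1}$ data) and not $t^{-1}$; reaching $t^{-1}$ would require stronger assumptions on $r$ (more smoothness or analyticity near $\pm z_0$). Note also that your claim for the cutoff piece, $\int_0^{c}e^{-c|\alpha|tv}\,\mathrm{d}v\lesssim t^{-1}$, tacitly uses a uniform positive lower bound $\kappa$ for $\frac{u^2+v^2-z_0^2}{(u^2+v^2)z_0^2}$ on the support of the integrand; this constant degenerates as $s\to\pm z_0$, so even that piece needs the support of $\bar{\partial}\chi_{\mathcal{Z}}$ to be separated from the phase points. (The same unjustified uniform $\kappa$ appears in the paper's own proof of Proposition~\ref{prop4.7}, so your honest flagging of the difficulty at $z_0$ is a point where your write-up is sounder than the source --- but the gap remains open in both.)
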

\begin{proof}
Prove this proposition in a similar way to proposition~\ref{prop4.7}.
\end{proof}

\section{The long-time behaviors of the AB system}\label{s:6}
Based on the above discussion, we can infer that
\begin{align*}
M(z)=M^{(3)}(z)\left(\tilde{M}_{z_0}\left(\sqrt{-\frac{\alpha t}{z_0}}(z-z_0)\right)
+\tilde{M}_{-z_0}\left(\sqrt{-\frac{\alpha t}{z_0}}(z+z_0)\right)\right)R^{(2)^{-1}}(z)\delta^{\sigma_{3}}(z).
\end{align*}

In terms of $R^{(2)}(z)=I$ as $z\in\Omega_{2}$ or $z\in\Omega_{5}$, we derive the asymptotic behavior of $M(z)$
\begin{align*}
M\rightarrow I+\bigg(M^{(3)}_{1}+\sqrt{\frac{z_0}{-\alpha t}}\tilde{M}_{z_0}^{1}+\sqrt{\frac{z_0}{-\alpha t}}\tilde{M}_{-z_0}^{1}-
\big(i\int_{-z_0}^{z_{0}}\upsilon(s)ds\big)^{\sigma_3}\bigg)z^{-1}+O(z^{-2}).
\end{align*}

Thus, we deduce the following results.
\begin{thm}\label{thm1}
Let $A(x,t)$ and $B(x,t)$ be the solutions of the system\eqref{AB} whose initial values satisfy the condition \eqref{IVC}, the solutions admits the following asymptotic behaviors as $t\rightarrow+\infty$
\begin{align}
A(x,t)&=4\left[\left(M_{1}^{(3)}\right)_{12}+\left(\sqrt{\frac{z_0}{-\alpha t}}\tilde{M}_{z_0}^{1}\right)_{12}
+\left(\sqrt{\frac{z_0}{-\alpha t}}\tilde{M}_{-z_0}^{1}\right)_{12}\right]\\\nonumber
&=4\frac{\sqrt{z_0}}{\sqrt{-\alpha t}}\left(i\beta_{12}^{z_0}-i\beta_{12}^{-z_0}\right)+O(t^{-1}),\\
B(x,t)&=-\frac{4i}{\beta}\left[\left(M_{1}^{(3)}\right)_{11}+\left(\sqrt{\frac{z_0}{-\alpha t}}\tilde{M}_{z_0}^{1}\right)_{11}
+\left(\sqrt{\frac{z_0}{-\alpha t}}\tilde{M}_{-z_0}^{1}\right)_{11}-
\big(i\int_{-z_0}^{z_{0}}\upsilon(s)ds\big)\right]_t\\\nonumber
&=O(t^{-1}).
\end{align}
where $z_0=\sqrt{-\frac{\alpha t}{4x}}$, $\beta_{12}^{z_0}$ and $\beta_{12}^{-z_0}$ are defined in \eqref{beta1} and \eqref{beta2}, respectively.
\end{thm}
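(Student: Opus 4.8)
The plan is to treat this theorem as the synthesis step: all the genuinely analytic work (the parabolic-cylinder models of Section~\ref{s:5}, the $\bar\partial$-estimate $|M^{(3)}_1|\lesssim t^{-1}$, and the decay of $\|J^{(2)}-\mathbb{I}\|$ from Proposition~\ref{prop4.1}) is already in hand, so what remains is to feed the chain of transformations back into the reconstruction formulas $A=4\lim_{z\to\infty}(zM)_{12}$ and $B=-\frac{4i}{\beta}\lim_{z\to\infty}\frac{d}{dt}(zM)_{11}$. First I would substitute the assembly formula $M(z)=M^{(3)}(z)\big(\tilde M_{z_0}+\tilde M_{-z_0}\big)R^{(2)^{-1}}(z)\delta^{\sigma_3}(z)$ and let $z\to\infty$ inside $\Omega_2\cup\Omega_5$, where $R^{(2)}(z)=\mathbb{I}$, so that factor drops out of the large-$z$ expansion entirely.

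Next I would expand each surviving factor to order $z^{-1}$. Each has the form $\mathbb{I}+(\cdot)z^{-1}+O(z^{-2})$, so the $z^{-1}$ coefficient of the product is simply the sum of the individual coefficients. Concretely, $M^{(3)}=\mathbb{I}+M^{(3)}_1 z^{-1}+O(z^{-2})$; the scaling $k=\sqrt{-\alpha t/z_0}\,(z\mp z_0)$ turns $\tilde M_{\pm z_0}=\mathbb{I}+\tilde M^1_{\pm z_0}k^{-1}+\cdots$ into $\mathbb{I}+\sqrt{z_0/(-\alpha t)}\,\tilde M^1_{\pm z_0}z^{-1}+O(z^{-2})$ after using $(z\mp z_0)^{-1}=z^{-1}+O(z^{-2})$; and the fourth bullet for $\delta$ gives a diagonal contribution whose $(1,1)$ entry at order $z^{-1}$ is $-i\int_{-z_0}^{z_0}\upsilon(s)\,ds$. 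Collecting these reproduces precisely the displayed expansion of $M$ preceding the theorem, whose $z^{-1}$ coefficient I then read off entry by entry.

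For $A$ I extract the $(1,2)$ entry. Since $\delta^{\sigma_3}$ is diagonal it contributes nothing off-diagonal at this order, leaving $(M^{(3)}_1)_{12}+\sqrt{z_0/(-\alpha t)}\big[(\tilde M^1_{z_0})_{12}+(\tilde M^1_{-z_0})_{12}\big]$. Inserting the parabolic-cylinder identities $\beta_{12}^{z_0}=-i(\tilde M^1_{z_0})_{12}$ and $\beta_{12}^{-z_0}=i(\tilde M^1_{-z_0})_{12}$, i.e.\ $(\tilde M^1_{z_0})_{12}=i\beta_{12}^{z_0}$ and $(\tilde M^1_{-z_0})_{12}=-i\beta_{12}^{-z_0}$, produces the leading term $4\sqrt{z_0/(-\alpha t)}\,(i\beta_{12}^{z_0}-i\beta_{12}^{-z_0})$, while the bound $|M^{(3)}_1|\lesssim t^{-1}$ absorbs $4(M^{(3)}_1)_{12}$ into the $O(t^{-1})$ remainder. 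This is exactly where the $\bar\partial$-machinery pays off: the remainder is genuinely $O(t^{-1})$ rather than the $O(t^{-1}\log t)$ of the classical steepest-descent treatment, which is the headline improvement.

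The delicate part is $B$, because of the extra $\frac{d}{dt}$. Using $(\tilde M^1_{\pm z_0})_{11}=0$, the $(1,1)$ entry of the $z^{-1}$ coefficient collapses to $(M^{(3)}_1)_{11}-i\int_{-z_0}^{z_0}\upsilon(s)\,ds$, so $B=-\frac{4i}{\beta}\frac{d}{dt}\big[(M^{(3)}_1)_{11}-i\int_{-z_0}^{z_0}\upsilon(s)\,ds\big]$. The main obstacle will be showing this time derivative is $O(t^{-1})$: one needs a derivative analogue of the estimate for $M^{(3)}_1$ (the companion proposition, proved ``in a similar way'' to Proposition~\ref{prop4.7}, is what controls $\frac{d}{dt}(M^{(3)}_1)_{11}$), together with $\frac{d}{dt}\int_{-z_0}^{z_0}\upsilon(s)\,ds=[\upsilon(z_0)+\upsilon(-z_0)]\dot z_0$, where $\dot z_0=z_0/(2t)$; here the regularity $r\in H^{1,1}(\mathbb{R})$ must be invoked to force $\upsilon(\pm z_0)$ to decay fast enough that this product is $O(t^{-1})$. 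Combining the two contributions yields $B=O(t^{-1})$ and completes the proof.
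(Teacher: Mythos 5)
Your proposal follows the paper's own argument essentially verbatim: assemble $M(z)=M^{(3)}(z)\big(\tilde M_{z_0}+\tilde M_{-z_0}\big)R^{(2)}(z)^{-1}\delta^{\sigma_3}(z)$, expand at order $z^{-1}$ in $\Omega_2\cup\Omega_5$ where $R^{(2)}=\mathbb{I}$, and read off the $(1,2)$ and $(1,1)$ entries using $\beta_{12}^{z_0}=-i(\tilde M^1_{z_0})_{12}$, $\beta_{12}^{-z_0}=i(\tilde M^1_{-z_0})_{12}$, $(\tilde M^1_{\pm z_0})_{11}=0$, and $|M^{(3)}_1|\lesssim t^{-1}$, exactly as the paper does. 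If anything, your handling of the $t$-derivative in $B$ — the Leibniz term $[\upsilon(z_0)+\upsilon(-z_0)]\dot z_0$ with $\dot z_0=z_0/(2t)$, the use of $r\in H^{1,1}(\mathbb{R})$ to make $\upsilon(\pm z_0)z_0$ bounded, and the explicit recognition that a derivative analogue of the bound on $M^{(3)}_1$ is needed — is more careful than the paper itself, which asserts $B=O(t^{-1})$ without addressing these points.
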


\noindent{\bf Acknowledgments} This work was supported by the National Natural Science Foundation of China under Grant No. 11975306, the Natural Science Foundation of Jiangsu Province under Grant No. BK20181351, the Six Talent Peaks Project in Jiangsu Province under Grant No. JY-059, and the Fundamental Research Fund for the Central Universities under the Grant Nos. 2019ZDPY07 and 2019QNA35.


\bibliographystyle{plain}

\end{document}